\newtheorem{theorem}{Theorem}[section]
\newtheorem{proposition}[theorem]{Proposition}
\newenvironment{proof}{\removelastskip \noindent {\bf Proof~:} } { \hspace*{\fill} {\bf q.e.d.} \medskip \noindent}
\newtheorem{remark}{Remark}[section]
\newtheorem{corollary}[theorem]{Corollary}
\title{On the $N$-Extended Euler System I.\\  
Generalized Jacobi Elliptic Functions}
\author[]{S. Ferrer}
\author[]{F. Crespo}
\author[]{F. J. Molero}
\affil[]{{\small Dpto de Matem\'atica Aplicada, Universidad de Murcia, 30071 Espinardo,  Spain}}
\begin{document}
\maketitle

\begin{abstract}
We study the integrable system of first order differential equations $\omega_i(v)'=\alpha_i\,\prod_{j\neq i}\omega_j(v)$, $(1\!\leq i, j\leq \! N)$ as an initial value problem, with real coefficients $\alpha_i$ and initial conditions $\omega_i(0)$. The analysis is based on its quadratic first integrals. For each dimension $N$, the system defines a family of functions, generically hyperelliptic functions.  When $N=3$, this system  generalizes the classic Euler system for the reduced flow of the free rigid body problem, thus we call it $N$-extended Euler system ($N$-EES).  In this Part I the cases $N=4$ and $N=5$ are studied, generalizing Jacobi elliptic functions which are defined as a 3-EES. Taking into account the nested structure of the $N$-EES, we propose repa\-ra\-metrizations of the type ${\rm d}v^*=g(\omega_i)\,{\rm d}v$ that separate geometry from dynamic.  Some of those parametrizations turn out to be genera\-li\-zation of the {\sl Jacobi amplitude}.  In Part II we consi\-der geometric properties of the $N$-system and the numeric computation of the functions involved. It  will be published elsewhere.

{\bf keywords:} Integrable systems \and Generalized Euler system \and Jacobi and Weierstrass elliptic functions \and third Legendre elliptic integral

\end{abstract}

\section{Introduction}
\label{intro}
We are interested in the real functions $\omega_i(v)$ which are solutions of the integrable system of differential equations  
\begin{equation}\label{eq:EESn}
\frac{{\rm d}\omega_i}{{\rm d}v}=\alpha_i\,\prod_{j\neq i}\omega_j,\qquad (1\!\leq i ,j\leq \! N),
\end{equation}
with coefficients and initial conditions $\alpha_i, \omega_i(0)\!\in\!\mathbb{R}$.
Our study is based on the quadratic expressions 
 \begin{equation}\label{eq:integralescuadraticas}
C_{ij}(v) = \alpha_i\,\omega_j(v)^2 - \alpha_j\,\omega_i(v)^2
\end{equation}
which are integrals of the system (\ref{eq:EESn}). 
Initial conditions (IC) will be denoted $\omega^0\equiv \omega(0)=(\omega_1(0),\ldots,\omega_n(0))$. To simplify expressions we will use as notation $\omega_i\equiv\omega_i(v)$ and $\omega_i'\equiv{\rm d}\omega_i/{\rm d}v$. 

From the geometric point of view, the integrals (\ref{eq:integralescuadraticas}) tell us that the flow defined by (\ref{eq:EESn}) is the result of the intersection of quadrics in dimension $N$; more precisely, elliptic and hyperbolic cylinders. Thus, the $N$-EES family belongs to a larger family where the paraboloids are also included, as well as the degenerate cases defined by the hyperplanes. Its Poisson structure is defined by a determinant built on the gradients of the independent integrals, {\it i.e.} the Casimirs. When $N=3$ the classic mixed product  is precisely the determinant: one of the  integrals is the Casimir and the other the Hamiltonian; details will be given elsewhere \cite{Crespo2015}.

One of the features of the system (\ref{eq:EESn}) is that it allows, from a dynamical system point of view, dealing with a large family of functions in the real domain  in a unified way. It ranges from trigonometric functions (harmonic oscillator) to elliptic functions (pendulum and free rigid body), including also rational functions (for unbounded trajectories), etc. We will learn that different systems will allow us to introduce the same functions. For instance the hyperbolic functions may be introduced with $N=2$, but also appear when $N=3$ and two of the coefficients are equal). The interest of the study of the generic system $N> 4$ (the case $N=4$ is special, as we show below) lies in the fact that we face then hyperelliptic integrals and their inverses, a well established theory of special functions of complex variable made in XIX century which, nowadays, is in a revival in several branches of science, particularly in mechanics. But, although the theory is `at hand', nevertheless its application results a nontrivial task, because of the number of parameters involved in the definition of the functions, solutions of an IVP. 

\subsection{On Euler system, Jacobi functions and 3-EES}
\label{sec:Jacobi}
In this paper, our program is to generalize {\sl Jacobi elliptic functions}. Thus,  within the dynamical system point of view we have adopted, let us remember how all this started. The history of the $N$-EES begins with the well known Euler system of nonlinear differential equations in three dimensions \cite{MarsdenRatiu}, giving the reduced dynamics of the free rigid body problem (the dynamics of the angular momentum vector ${\bf \Pi}$ in the moving frame)
\begin{equation}\label{Euler}
\Pi_1'= \alpha_1\,\Pi_2\Pi_3, \,\,\Pi_2'= \alpha_2\,\Pi_1\Pi_3, \,
\Pi_3'= \alpha_3\,\Pi_1\Pi_2,
\end{equation}
such that $\sum\alpha_i=0$, where $\alpha_i$ are functions of the principal moments of inertia.

Associated with (\ref{Euler}), the second fundamental system, known as 
the {\sl Jacobi system}, is given by
\begin{equation}\label{Jacobi}
\omega_1'= \omega_2\,\omega_3,  \quad\omega_2'= -\omega_1\,\omega_3, \quad
\omega_3'= -m\,\omega_1\,\omega_2,
\end{equation}
with $\omega(0)=(0,1,1)$. The functions solution of (\ref{Jacobi}), denoted as $\omega_1\equiv {\rm sn}, \,\omega_2\equiv {\rm cn}$ and $\omega_3\equiv {\rm dn}$, are called {\sl Jacobi elliptic functions}. Then, the solution of (\ref{Euler}) are given by means of those functions, using the method of undetermined coefficients. For some readers could be useful to consult our paper \cite{CrespoFerrer} where we have studied  the {\sl extended Euler system} \begin{equation}\label{sistema3}
\omega_1'= \alpha_1\,\omega_2\omega_3,  \quad\omega_2'= \alpha_2\,\omega_1\omega_3, \quad
\omega_3'= \alpha_3\,\omega_1\omega_2,
\end{equation}
{\it i.e.} the (\ref{eq:EESn}) for $N=3$, considering generic values for coefficients $\alpha_i$ and initial conditions defining the system. 

Relying on the work of Tricomi \cite{Tricomi}, Hille \cite{Hille} and Meyer \cite{Meyer} dedicated to system (\ref{Jacobi}),  we have shown  in a straightforward manner how Jacobi and Weierstrass elliptic functions in the real domain are connected with this system \cite{CrespoFerrer}, although the tradition is to treat them separately because of the their intrinsic differences in the complex domain (see for instance Whittaker and Watson \cite{Whittaker} and Lawden \cite{Lawden}). Here we will apply the same approach to the system in $N$-dimensions. More precisely, we will present the generalization of both types of functions, where the $N$-Weierstrass function relates with the norm of the vector defined by the functions $\omega_i$.
\subsection{Integrals, functions and regularization }
\label{sec:regularization}
Moreover, as an alternative to confront directly with hyperelliptic functions, we propose {\sl to experiment} with repa\-ra\-metrizations starting from low dimensions. More precisely, we extend the regularization ${\rm d}v^*=\omega_3{\rm d}v$,  already studied for the case $N=3$ by Molero {\it et al.} \cite{Molero}. This way of proceeding seems to be an open line of work. The fact that elliptic and hyperelliptic functions are `naturally' introduced within the context of complex functions may explain why we have not found references. It is due to the consideration of those functions in a dynamical systems context, in the real domain, that the regularization enters on the scene. More precisely we focus on `regularizations' of the type ${\rm d}v^*=g(\omega_i){\rm d}v$, a technique well known in classical fields such as Celestial Mechanics (where they are used for studies ranging from collisions to efficient numerical integration schemes). We will see that the new variable is a generalization of the Jacobi amplitude.  This procedure, based on the symmetry of the system, alleviates the manipulation of the hyperelliptic functions involved, which are relegated to only one quadrature (the {\sl regularization equation}), separating it from the geometry (it is part of our research, knowing more on how generic this procedure is). 

This research has two parts. Part I, which makes the content of this paper,  works in detail the cases $N=4,5$. The key aspect associated with this case is that for each IVP we deal with two or three parameters. In Section~\ref{sec:generic} we briefly refers to the equilibria as well as particular solutions such as the rectilinear. After that we fix the dimension considering the case 4-EES. In Section~\ref{sec:N4ratios} we present a basic feature related to the ratios of the functions. In Section~\ref{sec:Mahler} we focus in a biparametric system, which we dubbed as {\sl Mahler system}.  In Section~\ref{sec:regularizacion} we apply to our system the regularization technique. We identify that the new variable is a `generalized amplitude'.
In Sect. \ref{sec:additionformulas} we provide with the addition formulas associated to the Mahler system. Using them we propose extending the work of Bulirsch and Fukushima, we introduce some formulas related to the numerical evaluation of a 4-EES.
In Section~\ref{sec:N5} we approach the system for $N=5$, focusing in one of the particular cases, showing its connection with the previous dimension. Finally, as an application, we briefly consider in Section~\ref{sec:FRB} the free rigid body formulated in Andoyer variables

For the benefit of the reader we include two Appendices which contain properties of $\theta_i$ and elliptic Jacobi functions. There is a Part II, devoted to generic features of (\ref{eq:EESn}) from the geometric point of view, and to the numeric evaluation of the Mahler system, following the steps of Bulirsch and Fukushima. This  will be published elsewhere.

We ought to close the Introduction pointing out that this paper does not contain 
a complete analysis of the relative role of the parameters involved in the defined functions. Some transformations related to the range of those parameters are required, similar to the well known transformations for the elliptic modulus of the Jacobi functions. That analysis is still in progress.

\section{Some basic features of $N$-EES}
\label{sec:generic}
We have mentioned in the Introduction that our interest in this paper focuses on the study of some systems (\ref{eq:EESn}) of low dimension. Nevertheless, as in any dimension common features are present, it is worth to briefly refer to some of them.

\subsection{On particular solutions: equilibria and straight lines through the origin}  
Before we start our analysis of the IVP,  a first question is to identify the equilibria of the system (\ref{eq:EESn}). Denoting $P=(p_1,p_2,\ldots,p_n)$ an equilibrium point, we easily check that the system has the following set of equilibria:
\begin{itemize}
\item Origin $P=0\in \mathbb{R}^n$,
\item For $n\geq 3$, the points: $P_i=(0,\ldots, p_i,\ldots,0)$, \quad $1\leq  {i}\leq n$,  functions of the initial conditions.
\item For $n\geq 4$, planes $\Pi_{i_1,i_2}=(0,\ldots, p_{i_1},\ldots,p_{i_2},\ldots  0)$, \quad $1\leq  {i_1}<{i_2}\leq n$, functions of the initial conditions.
\item For $n\geq 5$, the hyperplanes 
$$\Pi_{i_1,i_2,\ldots,i_{n-2}}=(0,\ldots, p_{i_1},\ldots,p_{i_2},\ldots, p_{i_{n-2}},\ldots 0),$$  $1\leq  {i_1}<{i_2}< {i_{n-2}}\leq n$.
\end{itemize}
Thus, associated to these equilibria hyperplanes, we have the study 
of their invariant manifolds and their connections, generalizing the heteroclinic trajectories in three dimensions. This is out of the scope of the present paper. 
\medskip\par\noindent
{\sl Straight-lines through the origin}. Meanwhile in the generic study of the quadratures associated with our system (see Sect.~\ref{sec:quadrature}) an assumption is commonly made, namely, the roots of the polynomials involved are different, when considering an IVP we may be under a scenario where we have multiple roots. This is precisely the case with {\sl straight-lines through the origin}. Then, instead of requiring the use of special functions,  the solutions are expressed by means of {\sl elementary functions}, different for each dimension. 

\subsection{Reduction to quadratures: Generalized Weierstrass function}
\label{sec:quadrature}
Taking into account the integrals (\ref{eq:integralescuadraticas}), and proceeding like in the classic case $N=3$, we may reduce the system to a fundamental differential equation in two forms. The first one, after choosing one of te functions, say $\omega_i$, it leads to the differential equation
\begin{equation}
\big(\frac{{\rm d}\omega_i}{{\rm d} v}\big)^2 = \alpha_i^{3-N}\,\big[\prod_{j\neq i}^N(\alpha_j\omega_i^2+C_i^j)\big]. 
\end{equation}
or, by separation, the corresponding quadrature
\begin{equation}
\alpha_i^{(3-N)/2}\,v=\int\frac{{\rm d}\omega_i}{[\prod_{j\neq i}^N(\alpha_j\omega_i^2+C_i^j)]^{1/2}}. 
\end{equation}
As an alternative, if we introduce the {\sl square of the norm} 
\begin{equation}\label{squarenorm}
\Omega_N(v)\equiv\omega(v)^2=\sum_{i=1}^N\omega_i(v)^2,
\end{equation}
after some straightforward computations we obtain
\begin{equation}\label{weierstrassquadrature0}
\Big(\frac{{\rm d}\Omega_N}{{\rm d} v}\Big)^2 = 4\,\prod_{i=1}^N (\Omega_N-b_i), \quad  \sum_{i=1}^N b_i=0,
\end{equation}
a differential equation whose solution $\Omega_N(v)$ may be seen as the generalized Weierstrass function $\wp(v)$. Following either way we confront generically hyperelliptic integrals.

\subsection{On the normalized $N$-EES}
\label{sec:normalized}
Associated to a generic $N$-EES (\ref{eq:EESn}), {\it i.e.} assuming that $\sum\alpha_i\neq 0$, we consider the {\sl square norm} function (\ref{squarenorm}) that satisfies
\begin{equation}
\frac{{\rm d}\omega}{{\rm d}v}=(\sum_{i=1}^N\alpha_i)\frac{1}{\omega}\prod_{i=1}^N \omega_i.
\end{equation}
Thus, introducing the functions $$\tilde\omega_i=\frac{\omega_i}{\omega},$$
we have
\begin{equation}
\frac{{\rm d}\phantom{-}}{{\rm d}v}\Big(\frac{\omega_i}{\omega}\Big)=[\alpha_i\omega^2-\left(\sum_{i=1}^N\alpha_i\right)\omega_i^2]\frac{1}{\omega^3}\prod_{j\neq i}^N \omega_j.
\end{equation}
which may be written also as
\begin{equation}
\frac{{\rm d}\tilde\omega_i}{{\rm d}v}=c_i\,\prod_{j\neq i}^N \tilde\omega_j\,\omega^{N-4},
\end{equation}
where the coefficients 
\begin{equation}
c_i=\alpha_i\omega^2-(\sum\alpha_i)\omega_i^2
\end{equation}
are integrals of the flow, whose values are determined for each IVP by the initial conditions. 
In other words, carrying out the reparametrization $v\rightarrow v^*$ given by
\begin{equation}\label{newquadrature}
{\rm d}v^*=\omega^{N-4}\,{\rm d}v,
\end{equation}
associated to (\ref{eq:EESn})  we have the {\sl normalized system}
\begin{equation}\label{sistemaNnormal}
\frac{{\rm d}\tilde\omega_i}{{\rm d}v^*}=c_i\prod_{j\neq i}^N \tilde\omega_j,
\end{equation}
with initial conditions
\begin{equation}
\tilde\omega_i(0)=\omega_i(0)/\omega(0), \quad \omega(0)^2=\sum\omega_i(0)^2,
\end{equation}
\emph{i.e.} the flow (\ref{sistemaNnormal}) lives in $\mathbb{S}^{N-1}$ and, like  the differential system satisfied by the angular momentum in 3-D, we have $\sum c_i=0$. Note that to deal with the system (\ref{sistemaNnormal}) versus (\ref{eq:EESn}) will bring advantages, at least from the numerical point of view.

With (\ref{sistemaNnormal}) integrated  we have $\tilde\omega_i=\tilde\omega_i(v^*)$. Then, we still have to implement the quadrature associated to the regularization (\ref{newquadrature}) in order to recover the relation with the original variable. For instance, considering the first integral $c_1$  we obtain
\begin{equation}
{\rm d}v= \omega^{4-N}\,{\rm d}v^*= \Big(\frac{c_1-(\sum\alpha_i)\tilde\omega_1(v^*)^2}{\alpha_1}\Big)^{\frac{4-N}{2}}\,{\rm d}v^*
\end{equation}
whose quadrature gives the parametrization relation,  solved generically by numeric methods.
Note that, the case $N=4$ is special, because we do not need to do regularization.

Moreover, we will not pursue here with the study of the normalized system (\ref{sistemaNnormal}). For details on this analysis we refer to \cite{Crespo2015}.
 
Let us close this Section  pointing out another  basic feature of this system; we refer to it as the {\sl scaling factor}.  If the functions $\omega_i(v), \, (i=1,\ldots N)$ is a set of  solutions, then taking a constant $c$, the functions $u_i(v)=c\,\omega_i(c^{N-2}v)$ satisfy the same system with  the corresponding IC given by $u_i(0)=c\,\omega_i(0)$. We will make use of this property along the paper.
\section{The case $N=4$. Relying on Jacobi elliptic functions?}
\label{sec:N4ratios}
We focus now on the 4-EES case. For each IVP, with some abuse of notation, we refer to the functions solutions generically with $\omega_i$. Later, referring to some specific systems, we will introduce new notations.

At this point, perhaps some readers would like to know the original motivation of our interest in 4-EES case. The reason is connected with an observation about the classical way in which the study of the rigid body dynamics is developed, based on Jacobi elliptic functions. Meanwhile those functions depend on one parameter (elliptic modulus), and appear naturally tied to problems like the pendulum or the measure of an arc of ellipse,  when we apply them to the rigid body problem, we need to consider a second parameter (the {\sl characteristic}, a function of the principal moments of inertia). In other words, the first and third Legendre elliptic integrals are involved. Since Jacobi, the way to proceed has been: (i) to introduce complementary functions $Z$ and $\Theta$; (ii) to make use of the addition formulas of elliptic functions, dealing with the second parameter as an amplitude, etc. Here we search for an alternative to such approach considering a generalization of Jacobi elliptic functions with two parameters.

Thus, we start with the 4-EES
\begin{equation}\label{sistema40}
\begin{array}{l}
\omega_1'= \alpha_1\,\omega_2\,\omega_3\,\omega_4, \\[1ex]
\omega_2'= \alpha_2\,\omega_1\,\omega_3\,\omega_4,\\[1ex]
\omega_3'= \alpha_3\,\omega_1\,\omega_2\,\omega_4,\\[1ex]
\omega_4'= \alpha_4\,\omega_1\,\omega_2\,\omega_3,
\end{array}
\end{equation}
with given initial conditions $\omega^0$, and the corresponding six quadratic first integrals (\ref{eq:integralescuadraticas}), of which three are independent (Fig.~\ref{fig:GenericSolution} shows a graph of the solution of the system (\ref{sistema40})). Although by scaling and a change of variables we could get rid of two of the coefficients $\alpha_i$, for our purpose it is convenient here to maintain all of them.
\begin{figure}[h!]
\centering
{\label{fig:SeccionesEenergia}\includegraphics[scale=1.3]{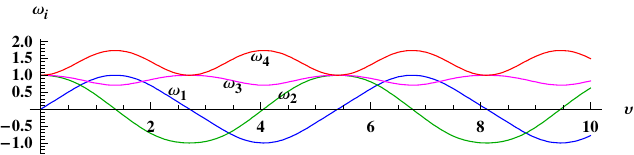}}
\vspace{-0.4cm}
\caption{\small Graphical solution of the previous system (\ref{sistema40}) for $\alpha_1=1;\,\alpha_2=-1;\,\alpha_3=2,\,\alpha_4=-0.5$.}
\label{fig:GenericSolution}
\end{figure}

To our surprise, the only reference we have found so far to (\ref{sistema40}) is E. Hille \cite{Hille}, where the case $N=4$ is considered in Chapter 2 (exercises 7, 8 and 9) under the suggestion of K. Mahler. More precisely he considers the IVP  $\omega(0)=(0,1,1,1)$ and coefficients $\alpha_i=(1,-1,-\alpha^2,-\beta^2)$, with both coefficients less than one. He says ``the solutions are hyperelliptic functions of genus 2'', a statement on which we disagree. Finally he mentions ``the example can be generalized in an obvious manner.''

Thus, our plan is: (i) to study (\ref{sistema40}) as an extension of the case $N=3$ where the Jacobi elliptic functions were defined. Note that represent a drastic reduction in the number of parameters (coefficients and IC) to discuss; (ii) To introduce again regularizations. In order to approach both aspects, apart from its own interest, we think the case $N=4$ is critical in the search for methodologies to follow when dealing with systems of higher dimension, {\it i.e.} in the reign of  hyperelliptic integrals.

\subsection{Nested structure and integration by Jacobi elliptic functions}

Extending what we know for the case $N=3$, a basic feature of the $N$-EES is its relation with the system verified by the ratios. Referring to that we say the 4-EES has a `nested structure', and we call it the `Glashier Ratios Property'. Moreover the case $N=4$ asks for a particular study devoted to it. As we will see, for other dimensions a regularization is needed.

\begin{proposition} {\bf (Glashier Ratios Property)}
\label{propo:glashier4} Given the functions $\omega_i(v)$ verifying a 4-EES, then the functions $\omega_i(v)/\omega_j(v)$ defined by their ratios, $(i,j,k,l)\in {\rm Per}(1,2,3,4)$ satisfy a 3-EES given by 
\begin{equation}\label{glashier4}
\begin{array}{l}
\displaystyle{\frac{{\rm d}\phantom{-}}{{\rm d}v}\Big(\frac{\omega_i}{\omega_l}\Big)=C_i^l\,\frac{\omega_j}{\omega_l}\frac{\omega_k}{\omega_l}},\\[1.5ex]
\displaystyle{\frac{{\rm d}\phantom{-}}{{\rm d}v}\Big(\frac{\omega_j}{\omega_l}\Big)=C_j^l\,\frac{\omega_i}{\omega_l}\frac{\omega_k}{\omega_l}},\\[1.5ex]
\displaystyle{\frac{{\rm d}\phantom{-}}{{\rm d}v}\Big(\frac{\omega_k}{\omega_l}\Big)=C_k^l\,\frac{\omega_i}{\omega_l}\frac{\omega_j}{\omega_l}},
\end{array}
\end{equation}
with initial conditions $\omega_i(0)/\omega_l(0)$ and coefficients given by
the integrals $C_i^l=\alpha_i\omega_l^2-\alpha_l\omega_i^2$.
\end{proposition}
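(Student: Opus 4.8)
The plan is to prove the claim by direct differentiation using the quotient rule, substituting the original system \eqref{sistema40}, and then recognizing the resulting expressions in terms of the ratios and the quadratic integrals $C_i^l$. I would fix a permutation $(i,j,k,l)\in{\rm Per}(1,2,3,4)$ and compute $\frac{{\rm d}}{{\rm d}v}(\omega_i/\omega_l)=(\omega_i'\omega_l-\omega_i\omega_l')/\omega_l^2$. Using \eqref{sistema40} we have $\omega_i'=\alpha_i\,\omega_j\omega_k\omega_l$ and $\omega_l'=\alpha_l\,\omega_i\omega_j\omega_k$, so the numerator becomes $\alpha_i\,\omega_j\omega_k\omega_l^2-\alpha_l\,\omega_i^2\omega_j\omega_k=(\alpha_i\omega_l^2-\alpha_l\omega_i^2)\,\omega_j\omega_k$. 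Dividing by $\omega_l^2$ gives $(\alpha_i\omega_l^2-\alpha_l\omega_i^2)\,\frac{\omega_j}{\omega_l}\frac{\omega_k}{\omega_l}$, which is exactly the first line of \eqref{glashier4} once we abbreviate $C_i^l=\alpha_i\omega_l^2-\alpha_l\omega_i^2$. The second and third equations follow by the same computation with $i$ replaced by $j$ and $k$ respectively, which is just a relabeling.

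The second thing to verify is that the coefficients $C_i^l$ really are constants of the motion, so that \eqref{glashier4} is genuinely a 3-EES (with constant coefficients) and not merely a formal rewriting. This is immediate from the statement made after \eqref{eq:integralescuadraticas}: the quadratic expressions $C_{ij}(v)=\alpha_i\omega_j^2-\alpha_j\omega_i^2$ are first integrals of \eqref{eq:EESn}, and $C_i^l$ is precisely $C_{li}$ in that notation (up to sign, which can be absorbed by the choice of labeling). For completeness one can also check invariance directly: $\frac{{\rm d}}{{\rm d}v}(\alpha_i\omega_l^2-\alpha_l\omega_i^2)=2\alpha_i\omega_l\omega_l'-2\alpha_l\omega_i\omega_i'=2\alpha_i\alpha_l\,\omega_i\omega_j\omega_k\omega_l-2\alpha_l\alpha_i\,\omega_i\omega_j\omega_k\omega_l=0$. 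Finally the initial conditions for \eqref{glashier4} are $\omega_i(0)/\omega_l(0)$ by definition of the new functions, assuming $\omega_l(0)\neq0$.

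There is essentially no serious obstacle here; the proposition is a structural identity and the proof is a one-line quotient-rule computation plus an appeal to the already-established fact that the $C_{ij}$ are integrals. The only point requiring a word of care is the nondegeneracy assumption $\omega_l\neq0$ (at least on the interval of interest), needed both to form the ratios and to divide by $\omega_l^2$; if $\omega_l$ vanishes at isolated points one would invoke continuity or restrict to a maximal interval where $\omega_l$ has constant sign. I would also remark that the structure is symmetric under the choice of which index plays the role of the denominator $l$, which is why the property deserves the name ``nested'': the four-dimensional system contains, in each of its coordinate charts on the ratios, a copy of the three-dimensional extended Euler system, and this is precisely what makes integration by Jacobi elliptic functions possible in dimension four.
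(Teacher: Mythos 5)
Your proof is correct and is exactly the computation the paper leaves implicit when it says the result ``is straightforward making use of the definition of the 4-EES'': the quotient rule, substitution of (\ref{sistema40}), and identification of $C_i^l=\alpha_i\omega_l^2-\alpha_l\omega_i^2$ as a first integral. Your added checks (constancy of $C_i^l$ and the caveat $\omega_l\neq 0$) are sensible details that the paper omits but do not change the argument.
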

\begin{proof} 
It is straightforward  making use of the definition of the 4-EES 
\end{proof} 
\begin{remark}
From the previous  Proposition \ref{propo:glashier4} readers familiar with the expressions of Jacobi elliptic functions,  and their computation by means of Jacobi theta functions $\theta_i(x)$, may wonder what the relation between those functions and the $\omega_i(v)$ might be. We have gathered some of those systems in an Appendix. In fact the reader will find in Lawden (Chp 1) a number of properties of $\theta_i$ functions which are also satisfied by the $\omega_i$. Perhaps, the simple fact that $\theta_1'(0)=\theta_2(0)\theta_3(0)\theta_4(0)$ is satisfied for the 4-EES when we take $\alpha_1=1$, is one of the most surprising. We will come back to this below.
\end{remark}
\begin{remark} Note that there is the possibility to take a slight different version of the ratios, namely to work with $u_j^i=c_j^i\,\omega_i/\omega_j$, with coefficients $c_j^i$ still to be determined, in order to simplify some expressions, adjust constants in applications, etc. We do not follow this alternative in this paper.
\end{remark}
\begin{proposition} 
\label{propo:sistema40}
For suitable IC the 4-EES (\ref{sistema40}) has as solution the bounded functions  $\omega_i(v)\equiv \omega_i(v;\alpha_i,\omega_i(0))$ given by
\begin{eqnarray}
&&\omega_1(v)=\tilde C_1^4\frac{{\rm sn}(a v|m_1)}{\sqrt{1-n_1\,{\rm sn}^2(a v|m_1)}},\label{omega41}\\
&&\omega_2(v)=\tilde C_2^4\frac{{\rm cn}(a v|m_1)}{\sqrt{1-n_1\,{\rm sn}^2(a v|m_1)}},\label{omega42}\\
&&\omega_3(v)=\tilde C_3^4\frac{{\rm dn}(a v|m_1)}{\sqrt{1-n_1\,{\rm sn}^2(a v|m_1)}},\label{omega43}\\
&&\omega_4(v)=\tilde C_4^4\frac{1}{\sqrt{1-n_1\,{\rm sn}^2(a v|m_1)}},\label{omega44}
\end{eqnarray}
where ${\rm sn}(a v|m_1)$, etc are the Jacobi elliptic functions, and the constants $\tilde C_i^4$, $a$, $m_1$ and $n_1$  are functions of $\alpha_i$ and $\omega_i(0)$.
\end{proposition}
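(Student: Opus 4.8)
The plan is to verify directly that the proposed formulas satisfy the 4-EES (\ref{sistema40}), after first reducing the problem to the Jacobi system (\ref{Jacobi}) via the Glashier Ratios Property (Proposition~\ref{propo:glashier4}). First I would take the ratios $r_1=\omega_1/\omega_4$, $r_2=\omega_2/\omega_4$, $r_3=\omega_3/\omega_4$. By Proposition~\ref{propo:glashier4} these satisfy a 3-EES with coefficients $C_1^4=\alpha_1\omega_4^2-\alpha_4\omega_1^2$, $C_2^4$, $C_3^4$. Invoking the already-established results for $N=3$ recalled in Section~\ref{sec:Jacobi} (the reduction of a 3-EES with suitable signs and initial data to the classical Jacobi system (\ref{Jacobi}) by a scaling $v\mapsto av$ and a linear rescaling of each $\omega_i$), one obtains, for initial conditions in the range that makes the trajectory bounded, $r_i(v)=k_i\,F_i(av\,|\,m_1)$ with $(F_1,F_2,F_3)=(\mathrm{sn},\mathrm{cn},\mathrm{dn})$ and constants $k_i$, $a$, $m_1$ expressed through $\alpha_i$ and $\omega_i(0)$; the modulus $m_1$ and the frequency $a$ come out of the standard comparison of the quadratic integrals $C_i^4$ with those of (\ref{Jacobi}).

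Next I would recover $\omega_4$ itself. From $\omega_4'=\alpha_4\,\omega_1\omega_2\omega_3=\alpha_4\,\omega_4^3\,r_1r_2r_3$ we get $(\,1/\omega_4^2\,)'=-2\alpha_4\,r_1r_2r_3$, and using $r_1r_2r_3=k_1k_2k_3\,\mathrm{sn}\,\mathrm{cn}\,\mathrm{dn}$ together with $\frac{\mathrm d}{\mathrm dv}\mathrm{sn}^2(av|m_1)=2a\,\mathrm{sn}\,\mathrm{cn}\,\mathrm{dn}$ (see the Appendix on Jacobi functions), this integrates in closed form to $1/\omega_4^2 = A - B\,\mathrm{sn}^2(av|m_1)$ for constants $A,B$ fixed by $\omega_4(0)$. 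Writing $A-B\,\mathrm{sn}^2 = A\,(1-n_1\mathrm{sn}^2)$ with $n_1=B/A$ identifies $\tilde C_4^4=A^{-1/2}$ and hence, via $\omega_i=r_i\,\omega_4$, the remaining constants $\tilde C_i^4=k_i\,A^{-1/2}$, giving exactly (\ref{omega41})--(\ref{omega44}).

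Finally I would close the loop by substituting the four candidate expressions back into (\ref{sistema40}) and checking each equation is an identity; this is the routine verification step, using only the derivative formulas for $\mathrm{sn},\mathrm{cn},\mathrm{dn}$ and the Pythagorean relations $\mathrm{cn}^2=1-\mathrm{sn}^2$, $\mathrm{dn}^2=1-m_1\mathrm{sn}^2$, and it simultaneously pins down the precise algebraic relations among $\tilde C_i^4$, $a$, $m_1$, $n_1$ and the data $(\alpha_i,\omega_i(0))$ (in particular the consistency constraints forcing $m_1$ and $n_1$ to be the specific rational combinations of the $C_i^j$). The main obstacle is not any single computation but the bookkeeping of \emph{which} initial conditions yield the bounded branch: one must delimit the region in $(\alpha_i,\omega_i(0))$-space where the underlying 3-EES for the ratios is of pendulum-type (so that $\mathrm{sn},\mathrm{cn},\mathrm{dn}$ rather than their singular or hyperbolic degenerations appear) and where simultaneously $1-n_1\mathrm{sn}^2$ stays positive, so that the square roots in (\ref{omega41})--(\ref{omega44}) are real; handling the sign choices and the several permutation-equivalent normalizations consistently is the delicate part, whereas the differential identities themselves are mechanical.
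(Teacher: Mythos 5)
Your proposal is correct and takes essentially the same route as the paper: reduce to a 3-EES for the ratios $\omega_i/\omega_4$ via the Glashier Ratios Property, identify that system's solution with scaled Jacobi functions ${\rm sn},{\rm cn},{\rm dn}$ by undetermined coefficients, and then reassemble $\omega_i=(\omega_i/\omega_4)\cdot\omega_4$. The only (immaterial) difference is in recovering $\omega_4$: you integrate $(1/\omega_4^2)'=-2\alpha_4\,r_1r_2r_3$ to get $1/\omega_4^2$ as an affine function of ${\rm sn}^2$, whereas the paper reads the same relation off directly from the quadratic first integral $\alpha_1\omega_4^2-\alpha_4\omega_1^2=C_1^4$ divided by $\omega_4^2$.
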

\begin{proof} 
Let us assume IC $\omega^0=(\omega_1^0,\ldots, \omega_4^0)$ such that $\omega_j\neq 0$ in its domain of definition. According to the previous Proposition, we consider the ratios and the reciprocals $1/\omega_j$, that we denote
\begin{equation}\label{ratios4}
u_i^j=\frac{\omega_i}{\omega_j}, \quad  i\neq j, \qquad  u_j^j=\frac{1}{\omega_j},
\end{equation}
in the domain where $\omega_j$ is defined.  Without loss of generality we assume we refer to the case $j=4$, with IC such that $\omega_4>0$. Moreover, we still simplify a bit more the notation writing  $u_i^4=u_i$.

Then, according to Proposition \ref{propo:glashier4} it results for the functions  $u_i$, $i=1,2,3$ we have the following system
\begin{equation}\label{sistema430}
\begin{array}{l}
u_1'= C_1^4\,u_2\,u_3, \\[1ex]
u_2'= C_2^4\,u_3\,u_1,\\[1ex]
u_3'= C_3^4\,u_1\,u_2,
\end{array}
\end{equation}
with IC $u_i(0)=u_i^0=\omega_i^0/\omega_j^0$. Moreover, from the first integral 
\begin{equation}
\alpha_1\omega_4^2-\alpha_4\omega_1^2=C_1^4
\end{equation}
we may write
\begin{equation}\label{u4u1}
u_4^2=\frac{1}{C_1^4}(\alpha_1-\alpha_4 u_1^2).
\end{equation}
Because the functions  $u_i$, $i=1,2,3$ satisfy (\ref{sistema430}), they belong to the set of functions defined by the `Jacobi elliptic functions' ${\rm sn}, {\rm cn}, {\rm dn}$ and their ratios. Then, following Crespo and Ferrer \cite{CrespoFerrer},  we know our  system corresponds to one of the four possible cases (Glashier systems), depending on the sign of the integrals. Here, to continue our reasoning on the system (\ref{sistema40}), we focus on the case where the sign of  $C_1^4$ is different of  $C_2^4$ and $C_3^4$ (the other cases are treated likewise). This means that  $u_i$, $i=1,2,3$ are of the form, say
\begin{equation}\label{indeterminados}
\begin{array}{l}
u_1(v)=\delta_1\,{\rm sn}(a v,m_1), \\
u_2(v)=\delta_2\,{\rm cn}(a v,m_1),\\
u_3(v)=\delta_3\,{\rm dn}(a v,m_1).
\end{array}
\end{equation}
Proceeding by the method of undetermined coefficients, replacing (\ref{indeterminados}) in (\ref{sistema430}) we identify that the constants $\delta_i, a$ y $m_1$ satisfy a system of algebraic equations whose solution is\begin{eqnarray*}
&&\delta_2=u_2^0, \quad \delta_3=u_3^0,\quad\delta_1=\sqrt{-\alpha_1/\alpha_2}\delta_2, \\ 
&&a=\alpha_1\delta_2\delta_3/\delta_1, \quad m_1= \alpha_3\delta_2^2/(\alpha_2\delta_3^2)
\end{eqnarray*}
 (for details see for instance Lawden \cite{Lawden}, p. 132).  

Summarizing, according to (\ref{ratios4}) and (\ref{sistema430}) we have $\omega_i=u_i/u_4$, where $u_i$ (i=1,2,3) are the Jacobi elliptic functions and $u_4$ is given by (\ref{u4u1}). From those expressions, we obtain the functions (\ref{omega41})-(\ref{omega44}),  where  
\begin{equation}
\tilde C_4^4=\sqrt{C_1^4/\alpha_1},\quad  \tilde C_i^4=\delta_i/\tilde C_4^4,\quad  n_1=\alpha_4\delta_1^2/\alpha_1
\end{equation}
and, as stated in the Proposition, initial conditions still have to be chosen such that $n_1<1$.
\end{proof}
Before we continue it is convenient to formulate the previous Proposition in a `complementary form', where we make more transparent the role played by coefficients and initial conditions.
\begin{proposition} 
\label{pro:Jacobisimilar}
The functions $\omega_i(v)$, $i=1,\ldots 4$ , given by
\begin{equation}\label{soluciones}
\begin{array}{l}
\displaystyle{\omega_1(v)=\frac{\omega_2(0)\,\omega_3(0)\,\omega_4(0)}{a}\,
\frac{{\rm sn}(av|m_1)}{\sqrt{1+n_1\,{\rm sn}^2 (av| m_1)}}},\\
\displaystyle{\omega_2(v) =  \omega_2(0)\,\frac{{\rm cn}(av|m_1)}{\sqrt{1+n_1\,{\rm sn}^2(av|m_1)}}}, \\
\displaystyle{ \omega_3(v)=\omega_3(0)\,\frac{{\rm dn}(av|m_1)}{\sqrt{1+n_1\,{\rm sn}^2(av|m_1)}}},\\
\displaystyle{ \omega_4(v)= \omega_4(0)\,\frac{1}{\sqrt{1+n_1\,{\rm sn}^2(av|m_1)}}}.
\end{array}
\end{equation}
satisfy a differential system of the type (\ref{sistema40}) given by
\begin{equation}\label{Jacobisimilar0}
\begin{array}{l}
\displaystyle{\omega_1'= \phantom{-\,}\omega_2\,\omega_3\,\omega_4}, \\
\displaystyle{\omega_2'= - (1+n_1)\frac{a^2}{\omega_3^2(0)\omega_4^2(0)}\,\omega_1\,\omega_3\,\omega_4},\\
\displaystyle{\omega_3'= -(m_1+n_1)\frac{a^2}{\omega_2^2(0)\,\omega_4^2(0)}\,\omega_1\,\omega_2\,\omega_4},\\
\displaystyle{\omega_4'= -n_1\frac{a^2}{\omega_2^2(0)\omega_3^2(0)}\,\omega_1\,\omega_2\,\omega_3},
\end{array}
\end{equation}
with  $\omega= (0,\omega_2(0), \omega_3(0),\omega_4(0))$ as initial conditions
\end{proposition}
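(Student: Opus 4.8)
The plan is to verify Proposition~\ref{pro:Jacobisimilar} by a direct substitution argument, essentially reversing the computation that produced Proposition~\ref{propo:sistema40}. First I would set $\omega_4(v)=\omega_4(0)/\sqrt{1+n_1\,{\rm sn}^2(av|m_1)}$ as the ``pivot'' function and form the three ratios $u_i=\omega_i/\omega_4$ for $i=1,2,3$, observing that the common denominator $\sqrt{1+n_1\,{\rm sn}^2}$ cancels, so that
\[
u_1=\frac{\omega_2(0)\omega_3(0)\omega_4(0)}{a\,\omega_4(0)}\,{\rm sn}(av|m_1),\qquad
u_2=\frac{\omega_2(0)}{\omega_4(0)}\,{\rm cn}(av|m_1),\qquad
u_3=\frac{\omega_3(0)}{\omega_4(0)}\,{\rm dn}(av|m_1).
\]
These are exactly the undetermined-coefficient forms (\ref{indeterminados}) with specific $\delta_i$, so by the theory of Jacobi elliptic functions (or by the cited Crespo--Ferrer analysis) they satisfy a 3-EES of the form (\ref{sistema430}) for some coefficients $C_1^4,C_2^4,C_3^4$. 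The arithmetic task is to read off these three coefficients from $\delta_i,a,m_1$ using the relations $a=\alpha_1\delta_2\delta_3/\delta_1$, $m_1=\alpha_3\delta_2^2/(\alpha_2\delta_3^2)$ and the analogous normalization $\alpha_1=1$ built into the first equation of (\ref{Jacobisimilar0}).

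Next I would recover the $\omega_i$ equations from the $u_i$ equations. From $\omega_i=u_i\,\omega_4$ and the first integral-type relation $\omega_4^2=(\alpha_1-\alpha_4 u_1^2)/C_1^4$ (the analogue of (\ref{u4u1})), one differentiates $\omega_i=u_i\omega_4$ and substitutes the 3-EES for the $u_i$; the product $u_ju_k$ terms reassemble, after multiplying through by appropriate powers of $\omega_4$, into the quadruple products $\omega_j\omega_k\omega_l$ demanded by (\ref{sistema40}). The coefficient of each such product is then a rational expression in $\alpha_i$, $\delta_i$, $a$, $m_1$, and one checks it equals the stated value, e.g. that the $\omega_2'$ coefficient is $-(1+n_1)a^2/(\omega_3^2(0)\omega_4^2(0))$, using $n_1=\alpha_4\delta_1^2/\alpha_1$ and the identity ${\rm cn}^2=1-{\rm sn}^2$, ${\rm dn}^2=1-m_1{\rm sn}^2$ to handle the derivative of the denominator. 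Finally one evaluates at $v=0$: since ${\rm sn}(0|m_1)=0$, ${\rm cn}(0|m_1)={\rm dn}(0|m_1)=1$, the initial conditions collapse to $\omega(0)=(0,\omega_2(0),\omega_3(0),\omega_4(0))$ as claimed.

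Alternatively, and perhaps more cleanly, I would simply \emph{differentiate} the four explicit formulas (\ref{soluciones}) directly. Writing $D(v)=1+n_1\,{\rm sn}^2(av|m_1)$, each $\omega_i$ has the shape $c_i\cdot N_i(v)/\sqrt{D(v)}$ with $N_i\in\{{\rm sn},{\rm cn},{\rm dn},1\}$, and using the standard derivative rules ${\rm sn}'=a\,{\rm cn}\,{\rm dn}$, ${\rm cn}'=-a\,{\rm sn}\,{\rm dn}$, ${\rm dn}'=-a\,m_1\,{\rm sn}\,{\rm cn}$ together with $D'=2a n_1\,{\rm sn}\,{\rm cn}\,{\rm dn}$, every $\omega_i'$ comes out proportional to ${\rm sn}\,{\rm cn}\,{\rm dn}/D^{3/2}$ or to (a combination reducing to) that same monomial; one then matches this against $\omega_j\omega_k\omega_l = c_jc_kc_l\,N_jN_kN_l/D^{3/2}$ and reads off the coefficient. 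This makes the ``quadruple product'' structure manifest and reduces the whole proposition to a handful of elementary trig-like identities for ${\rm sn},{\rm cn},{\rm dn}$.

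The main obstacle is purely bookkeeping: keeping the three ratios $m_1$, $n_1$, and the scale $a$ straight while tracking where each $\alpha_i$ and each $\omega_i(0)$ enters, so that the six quadratic integrals (\ref{eq:integralescuadraticas}) are consistent and the normalization $\alpha_1=1$ is respected. In particular one must be careful that the denominator here carries $+n_1$ rather than the $-n_1$ appearing in Proposition~\ref{propo:sistema40} — this reflects a sign/branch choice in how $n_1$ is defined relative to $\alpha_4\delta_1^2/\alpha_1$, and getting that sign right is what makes the coefficients in (\ref{Jacobisimilar0}) come out as displayed. No genuinely hard analysis is involved; the content of the proposition is the explicit dictionary between $(\alpha_i,\omega_i(0))$ and $(a,m_1,n_1)$, and the proof is the verification that this dictionary closes.
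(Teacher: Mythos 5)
Your proposal is correct, and your second route (direct differentiation of the explicit formulas using ${\rm sn}'=a\,{\rm cn}\,{\rm dn}$, etc., together with $D'=2an_1{\rm sn}\,{\rm cn}\,{\rm dn}$ and the quadratic identities to reduce $D-n_1{\rm sn}^2$, $D+n_1{\rm cn}^2$, $m_1D+n_1{\rm dn}^2$ to the constants $1$, $1+n_1$, $m_1+n_1$) is exactly what the paper does — its proof is the one-line ``straightforward exercise by computing derivatives.'' The first, ratio-based route you sketch is also workable but is just the reverse of Proposition~\ref{propo:sistema40} and is not needed here.
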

\begin{proof} 
It is a straightforward exercise by computing deri\-va\-ti\-ves.
\end{proof}

\begin{remark} 
In particular, choosing $\omega_i(0)=1$ $(i=2,3,4)$ and $a=1$, join with $n_1=n$ and $m_1=m-n$ in Proposition \ref{pro:Jacobisimilar}, we have the Jacobi elliptic functions
\begin{equation*}
{\rm sn}(v)=\frac{\omega_1(v)}{\omega_4(v)},\quad {\rm cn}(v)=\frac{\omega_2(v)}{\omega_4(v)},\quad{\rm dn}(v)=\frac{\omega_3(v)}{\omega_4(v)}
\end{equation*}
with elliptic modulus $m_1=m-n$, where $\omega_i(v;m,n)$ satisfy the system
\begin{equation}\label{ratiosJacobi}
\begin{array}{l}
\displaystyle{\omega_1'= \phantom{-\,}\omega_2\,\omega_3\,\omega_4}, \\
\displaystyle{\omega_2'= -(1+n)\,\omega_1\,\omega_3\,\omega_4},\\
\displaystyle{\omega_3'= -m\,\omega_1\,\omega_2\,\omega_4},\\
\displaystyle{\omega_4'= -n\,\omega_1\,\omega_2\,\omega_3},
\end{array}
\end{equation}
with integrals
\begin{equation}
\begin{array}{l}
\displaystyle{\omega_2^2+(1+n)\,\omega_1^2=1}, \\[1.1ex]
\displaystyle{\omega_3^2+m\,\omega_1^2=1},\\[1.1ex]
\displaystyle{\omega_4^2+n\,\omega_1^2=1}.
\end{array}
\end{equation}
If $0<n<m<1$, we have $-1/\sqrt{1+n}\leq \omega_1 \leq 1/\sqrt{1+n}$, $-1\leq \omega_2 \leq 1$, $\sqrt{1-m/(1+n)}\leq \omega_3 \leq 1$ and $\sqrt{1-n/(1+n)}\leq \omega_4 \leq 1$.

More details on the system (\ref{ratiosJacobi}) will not be given in the rest of this paper.
\end{remark}
\section{Studying two 4-EES systems}
Looking for the generalization of Jacobi elliptic functions, we now focus  on two cases of (\ref{sistema40}): 
\begin{itemize}
\item One-parameter ($\theta_i$ similar) family in Sec.~\ref{sec:thetas} and;
\item Two-parameter family (Mahler system) in Sec.~\ref{sec:Mahler}.
\end{itemize}
It is worth noting that the first two equations in both systems (see (\ref{Jacobisimilar1}) and (\ref{sistema4mn})) are equal, with the consequence that one of the integrals is $\omega_1^2+\omega_2^2=1$, which is not the case for the previous system (\ref{ratiosJacobi}).

In relation with both, before we continue, a comment on notation is due. In what follows,  it is convenient to redefine some of the constants which appear in the previous expressions. More precisely, in Sec.~\ref{sec:thetas} we write $m_1\equiv k^2$, and we will find that $a$ and $n_1$ are functions of $k$. Likewise,
in Sec.~\ref{sec:Mahler} we fix all initial conditions and coefficients  except two of them, denoted by $-m$ and $-n$. 
\subsection{One-parameter $\omega_i(v)$ functions, `similar' to Jacobi $\theta_i$ functions }
\label{sec:thetas}
We look here for functions $\omega_i$, solutions of our differential system (\ref{sistema40}), similar to Jacobi $\theta_i$ functions. What we mean by that should be made more precise: (i) coefficients and  IC of the 4-EES have to be dependent only of one parameter: $\alpha_i=\alpha_i(k)$, $\omega_i^0=\omega_i^0(k)$; (ii) Moreover those functions  $\omega_i(v;k)$ ought to be found imposing that they verify properties defining $\theta_i$ de Jacobi.

Such search does not appear straightforward because, we remember, $\theta_i$ functions are defined as 1-para\-meter Fourier series solving the heat equation. Our way of proceeding will be to take into account those properties of $\theta_i$ which could be imposed on the differential system: both the ratios and the identities satisfied by $\theta_i(0)$ are essential for us. 

\begin{proposition} {\rm\bf ($\omega_i$: `similar Jacobi $\theta_i$ functions')}
 Choosing initial conditions as functions of the elliptic modulus 
\begin{equation}\label{values1}
\omega_1(0)=0,\,\omega_2(0)=\sqrt{a\,k}, \, \omega_3(0)=\sqrt{a},\, \omega_4(0)=\sqrt{a\,k'}
\end{equation}
join with
\begin{equation}\label{values2}
a=\frac{2K}{\pi}, \qquad n_1=k'-1, \qquad  m_1=k^2
\end{equation}
where $k'=\sqrt{1-k^2}$, then we may write
\begin{equation}
\begin{array}{l}
v_1(\omega_3^2(0)z) =\displaystyle{\frac{\omega_3(0)}{\omega_2(0)}\, \frac{\omega_1(z)}{\omega_4(z)}},\\[2ex]
v_2(\omega_3^2(0)z) =\displaystyle{\frac{\omega_4(0)}{\omega_2(0)}\, \frac{\omega_2(z)}{\omega_4(z)}},\\[2ex]
v_3(\omega_3^2(0)z) =\displaystyle{\frac{\omega_4(0)}{\omega_3(0)}\, \frac{\omega_3(z)}{\omega_4(z)}}
\end{array}
\end{equation}
in other words, we express the Jacobi elliptic functions as ratios of the $\omega_i(v)$, in a similar way as Jacobi gave them with respect to the $\theta_i$ functions.
\end{proposition}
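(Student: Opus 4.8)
The plan is to read the three identities straight off the closed forms of Proposition~\ref{pro:Jacobisimilar}; nothing new has to be solved. The first — and conceptually the main — observation is that, under the prescriptions (\ref{values1})--(\ref{values2}), the chosen initial data are \emph{exactly} the classical Jacobi theta--constants. Indeed, with $a=2K/\pi$ and $k'=\sqrt{1-k^2}$, the standard normalizations $\theta_3(0)^2=2K/\pi$, $k=\theta_2(0)^2/\theta_3(0)^2$, $k'=\theta_4(0)^2/\theta_3(0)^2$ (Lawden~\cite{Lawden}, Whittaker--Watson~\cite{Whittaker}) give $\theta_1(0)=0=\omega_1(0)$, $\theta_2(0)=\sqrt{ak}=\omega_2(0)$, $\theta_3(0)=\sqrt{a}=\omega_3(0)$, $\theta_4(0)=\sqrt{ak'}=\omega_4(0)$ (the positive square roots being the consistent choice). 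Hence the functions $\omega_i(v;k)$ of Proposition~\ref{pro:Jacobisimilar} solve a 4-EES whose data match those carried by the $\theta_i$, and all that remains is to put (\ref{soluciones}) beside Jacobi's theta--quotient expressions for ${\rm sn},{\rm cn},{\rm dn}$.

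Next I would form the three ratios. In each of $\omega_1/\omega_4,\ \omega_2/\omega_4,\ \omega_3/\omega_4$ the common factor $\big(1+n_1\,{\rm sn}^2(av|m_1)\big)^{-1/2}$ cancels, so (\ref{soluciones}) yields
\[
\frac{\omega_1(v)}{\omega_4(v)}=\frac{\omega_2(0)\omega_3(0)}{a}\,{\rm sn}(av|m_1),\qquad
\frac{\omega_2(v)}{\omega_4(v)}=\frac{\omega_2(0)}{\omega_4(0)}\,{\rm cn}(av|m_1),\qquad
\frac{\omega_3(v)}{\omega_4(v)}=\frac{\omega_3(0)}{\omega_4(0)}\,{\rm dn}(av|m_1).
\]
Multiplying these respectively by $\omega_3(0)/\omega_2(0)$, $\omega_4(0)/\omega_2(0)$, $\omega_4(0)/\omega_3(0)$ and inserting $\omega_3(0)^2=a$, $m_1=k^2$ and $av=\omega_3^2(0)\,v$, every constant collapses to $1$ and the right-hand sides become exactly ${\rm sn}\big(\omega_3^2(0)v\,|\,k^2\big)$, ${\rm cn}\big(\omega_3^2(0)v\,|\,k^2\big)$, ${\rm dn}\big(\omega_3^2(0)v\,|\,k^2\big)$, i.e. the Jacobi functions of modulus $k$ (denoted $v_1,v_2,v_3$ in the statement) evaluated at $\omega_3^2(0)v$. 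This is the assertion, with the dummy variable renamed $z$. Note that $n_1$ drops out of these ratios; the value $n_1=k'-1$ is the one that additionally makes $\omega_i(v;k)$ itself proportional to $\theta_i$, and so belongs to the wider claim that the $\omega_i$ are ``similar to'' the $\theta_i$ rather than to the displayed quotients.

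To make precise the phrase ``in a similar way as Jacobi gave them with respect to the $\theta_i$'', I would set the result beside the textbook identities ${\rm sn}\big(\theta_3^2(0)\zeta,k\big)=\frac{\theta_3(0)}{\theta_2(0)}\frac{\theta_1(\zeta)}{\theta_4(\zeta)}$, ${\rm cn}\big(\theta_3^2(0)\zeta,k\big)=\frac{\theta_4(0)}{\theta_2(0)}\frac{\theta_2(\zeta)}{\theta_4(\zeta)}$, ${\rm dn}\big(\theta_3^2(0)\zeta,k\big)=\frac{\theta_4(0)}{\theta_3(0)}\frac{\theta_3(\zeta)}{\theta_4(\zeta)}$ and observe that our formulas are these under $\theta_i\mapsto\omega_i$, $\zeta\mapsto v$. (Alternatively, since the $\theta_i$ themselves obey a 4-EES with IC $\theta_i(0)$ — the relation $\theta_1'(0)=\theta_2(0)\theta_3(0)\theta_4(0)$ recalled above being one instance of the requisite trilinear identities — uniqueness for the IVP forces $\omega_i(v;k)=\theta_i(v)$ outright; but this stronger route is not needed for the stated quotients.)

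Given (\ref{soluciones}), the computation is elementary, so I do not expect a genuine obstacle. The only points that need care are the bookkeeping of the constant prefactors and the matching of conventions — parameter $m=k^2$ versus modulus $k$ in ${\rm sn}(\cdot|\cdot)$, and the normalizations $\theta_3^2(0)=2K/\pi$, $k=\theta_2^2(0)/\theta_3^2(0)$, $k'=\theta_4^2(0)/\theta_3^2(0)$ — so that (\ref{values1}) literally reproduces $\omega_i(0)=\theta_i(0)$, signs included. That, rather than any real difficulty, is the delicate part.
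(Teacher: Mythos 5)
Your proof is correct and follows essentially the same route as the paper: substitute (\ref{values1})--(\ref{values2}) into Proposition~\ref{pro:Jacobisimilar}, cancel the common factor $\big(1+n_1\,{\rm sn}^2(av|m_1)\big)^{-1/2}$ in the three ratios, and use $\omega_3^2(0)=a$, $m_1=k^2$, $av=\omega_3^2(0)v$ to identify the quotients with ${\rm sn}$, ${\rm cn}$, ${\rm dn}$; the theta-constant identification you lead with is exactly the motivation behind the paper's choice of data, and your bookkeeping of the prefactors checks out. One caveat on your final parenthetical: the $\theta_i(z)$ themselves do \emph{not} satisfy a 4-EES (only their ratios satisfy 3-EES's, as in Appendix~A), so the uniqueness argument forcing $\omega_i\equiv\theta_i$ does not go through --- which is precisely why the paper calls the $\omega_i$ only ``similar'' to the $\theta_i$; since you explicitly flag that route as unnecessary, the proof of the stated quotient identities is unaffected.
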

\begin{proof}.- It is a straightforward exercise replacing the previous values (\ref{values1}) and (\ref{values2}) in Proposition \ref{pro:Jacobisimilar}. The result is that the functions are
\begin{equation}\label{funcionesthetasimilar}
\begin{array}{l}
\displaystyle{\omega_1(z,k) =\sqrt{a\,kk'}\,\frac{{\rm sn}(u)}{\sqrt{1-(1-k')\,{\rm sn}^2(u)}}},\\
\displaystyle{\omega_2(z,k) =\sqrt{a\,k}\,\frac{{\rm cn}(u)}{\sqrt{1-(1-k')\,{\rm sn}^2(u)}}},\\
\displaystyle{\omega_3(z,k) =\sqrt{a}\,\frac{{\rm dn}(u)}{\sqrt{1-(1-k')\,{\rm sn}^2(u)}}},\\
\displaystyle{\omega_4(z,k) =\sqrt{a\,k'}\,\frac{1}{\sqrt{1-(1-k')\,{\rm sn}^2(u)}}},
\end{array}
\end{equation}
join with $u=az$. 

Thus the system (\ref{Jacobisimilar0}) given by
\begin{equation}\label{Jacobisimilar1}
\begin{array}{l}
\displaystyle{\omega_1'= \omega_2\,\omega_3\,\omega_4,} \\ [1.2ex]
\displaystyle{\omega_2'= -\omega_3\,\omega_4\,\omega_1},\\[1.2ex]
\displaystyle{\omega_3'= -\frac{1-k'}{k}\,\omega_4\,\omega_1\,\omega_2},\\ [1.2ex]
\displaystyle{\omega_4'= \frac{1-k'}{k}\,\omega_1\,\omega_2\,\omega_3,}
\end{array}
\end{equation}
with initial conditions (\ref{values1}), is the IVP we were looking for. Fig.~\ref{fig:ThetaSimilarm095} shows an example of a graph of this set of functions.
\end{proof}
\begin{figure}[h!]
\centering
{\label{fig:SeccionesEenergia}\includegraphics[scale=1.3]{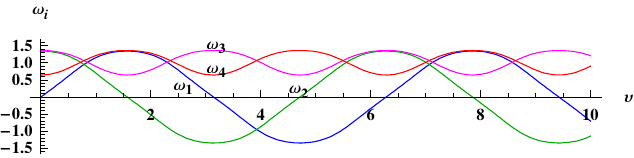}}
\vspace{-0.4cm}
\caption{\small Graph of the $\theta_i$-similar for $m=0.95$.}
\label{fig:ThetaSimilarm095}
\end{figure}

It is an exercise to check that the functions (\ref{funcionesthetasimilar}) verify  identical relations to the linear combinations  satisfied by the square of Jacobi $\theta_i$ functions (see Lawden, formulae (1.4.49)--(1.4.52), p. 11).

%
\subsection{Mahler system. A biparametric 4-EES:}
\label{sec:Mahler}
As a second distinguished 4-EES we consider now a `biparametric' case we call {\sl Mahler system}. It is an IVP  which defines the functions $\omega_i(v;m,n)$, solutions of (\ref{sistema40}) depending on two parameters,  such that 
\begin{itemize}
\item coefficients $\alpha=(1,-1,-m,-n)$ 
\item initial conditions $\omega^0=(0,1,1,1)$.
\end{itemize}
When $n=0$ then $\omega_i(v)$ are the Jacobi elliptic functions and $\omega_4(v)\equiv 1$.

Note that this represents some abuse of notation, because $n$ has already been used to denote the last component of an $N$-dimension system. Nevertheless, we think by the context it will become clear when is a coefficient: $n\in \mathbb{R}$, although in some occasions  $n$ might be used as a counter (ordinal number: $n\in \mathbb{N}$).

\begin{proposition} {\rm\bf (Mahler system)}\\
\label{propo:sistema4mn} 
The 4-EES given by  
\begin{equation}\label{sistema4mn}
\begin{array}{l}
\omega_1'= \,\omega_2\,\omega_3\,\omega_4, \\
\omega_2'= -\,\omega_1\,\omega_3\,\omega_4,\\
\omega_3'= -m\,\omega_1\,\omega_2\,\omega_4,\\
\omega_4'= -n\,\,\omega_1\,\omega_2\,\omega_3,
\end{array}
\end{equation}
where $n< m<1$, with IC $\omega(0)=(0,1,1,1)$, has  the functions
\begin{equation}\label{sistema4mnsolution}
\begin{array}{l}
\displaystyle{\omega_1=A\,\frac{{\rm sn}(av|m_1)}{\sqrt{1-n_1\,{\rm sn}^2(av|m_1)}}},\\
\displaystyle{\omega_2=\frac{{\rm cn}(av|m_1)}{\sqrt{1-n_1\,{\rm sn}^2(av|m_1)}}},\\
\displaystyle{\omega_3=\frac{{\rm dn}(av|m_1)}{\sqrt{1-n_1\,{\rm sn}^2(av|m_1)}}},\\
\displaystyle{\omega_4=\frac{1}{\sqrt{1-n_1\,{\rm sn}^2(av|m_1)}}},
\end{array}
\end{equation}
as solution, with values $a,A,m_1,n_1$  given by
\begin{equation}\label{valores}
\begin{array}{l}
a=\sqrt{1-n}, \quad \qquad A= 1/\sqrt{1-n}, \\[1.2ex]
\displaystyle{ n_1= \frac{n}{n-1}}, \quad\qquad \displaystyle{m_1= \frac{n-m}{n-1}}.
\end{array}
\end{equation}

\end{proposition}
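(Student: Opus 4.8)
The plan is to read Proposition~\ref{propo:sistema4mn} as the specialization of the complementary form in Proposition~\ref{pro:Jacobisimilar} obtained by fixing the initial data, and to pin down the four constants $a,A,m_1,n_1$ by matching coefficients. Proposition~\ref{pro:Jacobisimilar} asserts that, for \emph{arbitrary} $a,m_1$, for the parameter called there $n_1$ — which I will denote $\nu$ to avoid a clash, since the denominators in (\ref{soluciones}) and (\ref{sistema4mnsolution}) differ in sign, so that $\nu=-n_1$ — and for arbitrary $\omega_2(0),\omega_3(0),\omega_4(0)$, the explicit functions (\ref{soluciones}) solve a $4$-EES of the shape (\ref{Jacobisimilar0}). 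So first I would set $\omega_2(0)=\omega_3(0)=\omega_4(0)=1$ (the remaining datum $\omega_1(0)=0$ being automatic because ${\rm sn}(0)=0$); then (\ref{Jacobisimilar0}) reduces to the system with $\alpha_1=1$, $\alpha_2=-(1+\nu)a^2$, $\alpha_3=-(m_1+\nu)a^2$, $\alpha_4=-\nu a^2$, and with $1/a$ as the prefactor of $\omega_1$.

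Next I would impose $\alpha=(1,-1,-m,-n)$, i.e. solve $\nu a^2=n$, $(1+\nu)a^2=1$, $(m_1+\nu)a^2=m$. Subtracting the first equation from the second gives $a^2=1-n$, so $a=\sqrt{1-n}$ (real since $n<1$); then $\nu=n/(1-n)$, hence $n_1=-\nu=n/(n-1)$; the third equation gives $m_1a^2=m-n$, so $m_1=(m-n)/(1-n)=(n-m)/(n-1)$; and $A=1/a=1/\sqrt{1-n}$. These are exactly the values (\ref{valores}). Since the right-hand sides of (\ref{sistema4mn}) are polynomials the IVP has a unique solution, and since for $0<m_1<1$ the functions (\ref{sistema4mnsolution}) are defined for all real $v$, they furnish that solution globally. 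It then remains to check the ranges forced by $n<m<1$: one has $1-n>0$ and $0<m_1=(m-n)/(1-n)<1$ (the last inequality being equivalent to $m-n<1-n$, i.e. to $m<1$), so the Jacobi functions are genuine, and $n_1=n/(n-1)<1$, which guarantees $1-n_1{\rm sn}^2>0$ throughout, so the square roots in (\ref{sistema4mnsolution}) are real and nonvanishing.

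An equivalent self-contained route, parallel to the proof of Proposition~\ref{propo:sistema40}, is to invoke the Glashier Ratios Property (Proposition~\ref{propo:glashier4}) directly: the ratios $u_i=\omega_i/\omega_4$ $(i=1,2,3)$ satisfy a $3$-EES whose coefficients are the conserved quantities $C_1^4=\alpha_1\omega_4^2-\alpha_4\omega_1^2=1$, $C_2^4=\alpha_2\omega_4^2-\alpha_4\omega_2^2=n-1$, $C_3^4=\alpha_3\omega_4^2-\alpha_4\omega_3^2=n-m$ (evaluated at $v=0$), with initial data $(0,1,1)$; because $C_1^4>0>C_2^4,C_3^4$ this is the ${\rm sn}/{\rm cn}/{\rm dn}$ branch of the extended Euler system, so the method of undetermined coefficients yields $u_1=\tfrac{1}{\sqrt{1-n}}\,{\rm sn}(av|m_1)$, $u_2={\rm cn}(av|m_1)$, $u_3={\rm dn}(av|m_1)$ with $a=\sqrt{1-n}$ and $m_1=C_3^4/C_2^4=(n-m)/(n-1)$; one then recovers $\omega_4$ from the integral $\omega_4^2+n\omega_1^2=1$, namely $\omega_4=1/\sqrt{1-n_1{\rm sn}^2}$ with $n_1=-n/(1-n)=n/(n-1)$, and $\omega_i=u_i\omega_4$ reproduces (\ref{sistema4mnsolution}). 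In either route there is no deep difficulty; the only point that needs care is the sign bookkeeping relating the $n_1$ of the closed form to $\nu$ (equivalently, to $n\,\delta_1^2$ in the reconstruction of $\omega_4$), together with the elementary verification that $n<m<1$ places $m_1$ and $n_1$ in the ranges on which the closed forms make sense.
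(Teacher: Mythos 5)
Your proposal is correct and follows essentially the same route as the paper: the paper likewise obtains the Mahler functions by specializing the Jacobi‑elliptic solution family of the preceding propositions (it quotes the parameter formulas for arbitrary initial data $(0,\omega_2(0),\omega_3(0),\omega_4(0))$ and then substitutes $(0,1,1,1)$), which is your coefficient‑matching computation carried out in the reverse direction, and your second route mirrors the Glashier‑ratios argument used for Proposition~\ref{propo:sistema40}. Your explicit bookkeeping of the sign discrepancy between the $1+n_1\,{\rm sn}^2$ of Proposition~\ref{pro:Jacobisimilar} and the $1-n_1\,{\rm sn}^2$ of the statement is a point the paper glosses over, and the resulting values agree with (\ref{valores}).
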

\begin{proof}
Let us consider the system (\ref{sistema4mn}) as an IVP
with $\omega(0)= (0, \omega_2(0),\omega_3(0),\omega_4(0))$, $(\omega_i(0)\neq 0, \, i=2,3,4)$ dependent of two parameters $(m,n)$. It admits as solution the functions  
\begin{eqnarray*}
&&\tilde\omega_1(v) =A\,\frac{{\rm sn}(av|m_1)}{\sqrt{1-n_1\,{\rm sn}^2(av|m_1)}},\\
&&\tilde\omega_2(v) =\omega_2(0)\,\frac{{\rm cn}(av|m_1)}{\sqrt{1-n_1\,{\rm sn}^2(av|m_1)}},\\
&&\tilde\omega_3(v) =\omega_3(0)\,\frac{{\rm dn}(av|m_1)}{\sqrt{1-n_1\,{\rm sn}^2(av|m_1)}},\\
&&\tilde\omega_4(v) =\omega_4(0)\,\frac{1}{\sqrt{1-n_1\,{\rm sn}^2(av|m_1)}},
\end{eqnarray*}
where $a,A,m_1,n_1$ are given by 
\begin{equation}\label{parametersIC}
\begin{array}{l}
\displaystyle{a=\omega_3(0)\sqrt{\omega_4^2(0)-n\,\omega_2^2(0)}}, \\
\displaystyle{ A=\frac{\omega_2(0)\,\omega_4(0)}{\sqrt{\omega_4^2(0)-n\,\omega_2^2(0)}}}, \\
\displaystyle{ n_1=\frac{n\,\omega_2^2(0)}{n\,\omega_2^2(0)-\omega_4^2(0)}}, \\ 
\displaystyle{ m_1=\frac{\omega_2^2(0)\,(n\,\omega_3^2(0) - m\,\omega_4^2(0))}
          {\omega_3^2(0)\,(n\,\omega_2^2(0) - \omega_4^2(0))}}
\end{array}
\end{equation}
and the derivatives at the origin satisfy
\begin{equation}
\tilde\omega_1(0)'= \,\omega_2(0)\,\omega_3(0)\,\omega_4(0),\quad \tilde\omega_i(0)'=0, 
\end{equation}
 where $i=2,3,4$. Then, choosing as IC the quantities $\omega(0)=(0,1,1,1)$ and replacing them in (\ref{parametersIC}), we readily obtain the values (\ref{valores}) for those parameters.
\end{proof} 
\begin{remark}
In particular the  case $n=0$ leads to: $a=1, \, A= 1, \, n_1= 0$ y  $m_1= m,$ {\it i.e.}, the Jacobi elliptic functions. We have another special case when $m=0$. As we have assumed  $n<m$, in this case $n<0$ and the diferencial system (\ref{sistema4mn}) corresponds again to a Jacobi system, but now with negative parameter (there is a transformation to reduce it to the {\sl normal case}, see Appendix B, Sect. \ref{sec:Appendices}). For more on particular cases see   
Section~\ref{sec:particularcases}. We leave for the reader to work out the other particular cases defined by special values of the pair $(m,n)$. 
\end{remark}


%
\section{Regularization and `generalized amplitudes' for the Mahler system} 
\label{sec:regularizacion}
We have just solved the system  $N=4$ in the standard way: making use of known functions (Jacobi elliptic functions). In what follows we are going to proceed making use of the   {\sl regularization}. To do that, we start remembering in  Section \ref{sec:omega3} the recent proposal of the authors for $N=3$ (see Molero {\it et al.} \cite{Molero}),  which is intrinsically connected with the Jacobi {\sl amplitude}. After that we develop the same approach for the $N=4$ case. That proposal entails to study, at least, two possible regularizations $v\rightarrow v^*$ given by
\begin{itemize}
\item ${\rm d}v^*/{\rm d}v=\omega_4$,
\item ${\rm d}v^*/{\rm d}v=\omega_3\,\omega_4$,
\end{itemize}
which we gather in Sections \ref{sec:omega4} and \ref{sec:omega34}. Let us proceed one by one. But, before, we remember in Sect. \ref{sec:omega3} how this has been done for the 3-EES.
\subsection{Preliminaries: 3-EES and regularization}
\label{sec:omega3}
Let us consider the 3-EES (\ref{sistema3}) with initial conditions $\omega^0\equiv \omega(0)=(\omega_1(0),\omega_2(0),\omega_3(0))$, whose values we choose below. This system has the integrals
\begin{equation}\label{integrales3}
\alpha_1\omega_2^2-\alpha_2\,\omega_1^2=C_1^2, \qquad 
\alpha_1\omega_3^2-\alpha_3\,\omega_1^2=C_1^3.
\end{equation} 
Let us assume  $\alpha_i$ and IC such that $\omega_3(v)> 0$. Then, making use of the parametrization 
\begin{equation}\label{regularizacion}
\frac{{\rm d}v^*}{{\rm d}v}=\omega_3,
\end{equation}
the system (\ref{sistema3}) reduces to 
\begin{equation}\label{sistema2}
\frac{{\rm d}\omega_1}{{\rm d}v^*}= \alpha_1\,\omega_2, \qquad
\frac{{\rm d}\omega_2}{{\rm d}v^*}= \alpha_2\,\omega_1,
\end{equation}
join with the quadrature defined by (\ref{regularizacion}). Choosing the coefficients  $\alpha_1=1$, $\alpha_2=-1$ and IC $(\omega_1(0),\omega_2(0))=(0,1)$,  the system (\ref{sistema2}) defines the trigonometric (circular) functions:  
\begin{equation}\label{circulares}
\sin(v^*), \qquad \cos(v^*).
\end{equation}
(with other conditions, by a change of variables we may reduce it to this case)
Then, keeping in mind (\ref{integrales3}), the regularization (\ref{regularizacion}) takes the form
\begin{equation}\label{regularizacion1}
\frac{{\rm d}v^*}{{\rm d}v}=\sqrt{C_1^3+\alpha_3\,\omega_1^2}.
\end{equation}
Motivated by the dynamical system defining the {\sl simple pendulum}\footnote{This lead us to an interpretation of the regularization: $v\equiv t$ and $v^*\equiv \phi$, in other words `time' and `angle'. Angle in the 1-2 plane; arc through the integral $\omega_1^2+\omega_2^2=1$, a circle projection of the integral which is a cylinder.}, it is chosen $\omega_3(0)=1$ join with $\alpha_3=-k^2$, where $k^2<1$. Thus, replacing in (\ref{regularizacion1}) we have
\begin{equation}
{\rm d}v = \frac{{\rm d}v^*}{\sqrt{1-k^2\sin^2v^*}},
\end{equation}
whose quadrature and inversion leads us to the Jacobi  ``${\rm am}$'' function:
\begin{equation}
v^*= {\rm am}(v,k).
\end{equation}
Finally, replacing in (\ref{circulares}) we have the Jacobi functions 
\begin{equation}\label{elipticasJacobi}
\begin{array}{l}
\sin(v^*(v))=\sin({\rm am}(v,k)), \\
\cos(v^*(v))=\cos({\rm am}(v,k)),
\end{array}
\end{equation}
which today, following Gudermann,  are denoted in the form
\begin{equation*}\label{elipticasJacobi1}
\begin{array}{l}
{\rm sn}(v ;\, k)\equiv\sin({\rm am}(v,k)), \\
{\rm cn}(v ;\, k)\equiv\cos({\rm am}(v,k)).
\end{array}
\end{equation*}
Completing our set of functions $\omega_3$ is given by 
\begin{equation}
\omega_3(v)\equiv {\rm dn}(v ;\, k)=\sqrt{1-k^2{\rm sn}^2(v ;\, k)}.
\end{equation}
Summarizing, using the previous notation, the integrals 
 (\ref{integrales3}) lead us to the well known expressions relating these functions\begin{equation}
{\rm sn}^2+{\rm cn}^2 =1, \qquad {\rm dn}^2+ k^2{\rm sn}^2 = 1.
\end{equation}
Finally, replacing in  (\ref{sistema3}) we write what some authors refer as  ``{\sl derivation rules}'' of Jacobi functions:
\begin{equation}\label{derivadas}
{\rm sn}'= {\rm cn}\,{\rm dn}, \quad {\rm cn}'= -{\rm sn}\,{\rm dn},\quad
{\rm dn}'= -k^2{\rm sn}\,{\rm cn}.
\end{equation}
\subsection{The  ${\rm d}v^*/{\rm d}v=\omega_4$ regularization.}
\label{sec:omega4}
Proceeding as in the previous Section we treat now the case  $N=4$ by means of the regularization
\begin{equation}\label{regularizacion4}
\frac{{\rm d}v^*}{{\rm d}v}=\omega_4.
\end{equation}
\begin{remark}
Remember the comment above in relation with notation; although there is some abuse using again $v^*$ for denoting the new independent parameter, from the context we distinguish it from the one studied in the previous Section. 
\end{remark}
As a consequence the system (\ref{sistema40}) is reduced to 
\begin{equation*}
\frac{{\rm d}\omega_1}{{\rm d}v^*}= \alpha_1\,\omega_2\,\omega_3, \quad
\frac{{\rm d}\omega_2}{{\rm d}v^*}= \alpha_2\,\omega_1\,\omega_3,\quad
\frac{{\rm d}\omega_3}{{\rm d}v^*}= \alpha_3\,\omega_1\,\omega_2,
\end{equation*}
and $\omega_4(v^*)$ which will be obtained using one of the integrals, after we have solved the previous system.

We focus on the case $\alpha_1=1$, $\alpha_2=-1$ and $\alpha_3=-m$ because, as we have said before, we plan to generalize Jacobi elliptic functions. Thus, we have
\begin{equation}
\omega_1={\rm sn}(v^*;m_1), \, \omega_2={\rm cn}(v^*;m_1),\, \omega_3={\rm dn}(v^*;m_1)
\end{equation}
and for the differential relation \label{regularizacion4} using the integral $n\,\omega_1^2 + \omega_4^2=C_1^4$ and the initial conditions, we may write
\begin{equation}\label{cambioeliptico0}
v=\int\frac{{\rm d}v^*}{\sqrt{1 - n_1\,{\rm sn}^2(v^*;m_1)}}.
\end{equation}
%
\subsection{$N=4$. The regularization ${\rm d}v^*/{\rm d}v=\omega_3\,\omega_4$.}
\label{sec:omega34}
Proceeding the same way as for $N=3$, we treat now the case $N=4$ by means of the regularization
\begin{equation}\label{regularizacion34}
\frac{{\rm d}v^*}{{\rm d}v}=\omega_3\,\omega_4.
\end{equation}
As a consequence the system  (\ref{sistema40}) reduces to 
\begin{equation}\label{generic2}
\frac{{\rm d}\omega_1}{{\rm d}v^*}= \alpha_1\,\omega_2, \quad
\frac{{\rm d}\omega_2}{{\rm d}v^*}= \alpha_2\,\omega_1,
\end{equation}
and two quadratures associated to $\omega_3$ y $\omega_4$. In fact, they are not needed
because the integrals gives us  
$$\omega_i^2=C_1^i-\alpha_i\omega_1^2,\quad (i=3,4).$$ 
Note that $C_1^i$ are constants which depend on the initial conditions.

Without loss of generality we will assume our system is made of bounded functions. Then, by a change of variables, our system  (\ref{generic2}) reduces to $\alpha_1=1,\, \alpha_2=-1$, thus it results 
\begin{equation}\label{trig4}
\omega_1(v^*)=\sin v^*, \qquad \omega_2(v^*)=\cos v^*,
\end{equation}
Considering the previous integrals we may write (\ref{regularizacion34}) as follows
\begin{equation}
{\rm d}v = \frac{{\rm d}v^*}{\sqrt{\prod_{i=3}^4 (C_1^i-\alpha_i\sin^2 v^*)}}
\end{equation}
or in a slightly different form
\begin{equation}\label{cuadratura2}
\lambda{\rm d}v = \frac{{\rm d}v^*}{\sqrt{(1-\beta_1\sin^2 v^*)(1-\beta_2\sin^2 v^*)}}
\end{equation}
where $\beta_i$ and $\lambda$ are functions of $C_1^i$ and $\alpha_i$.  

In what follows, with the Mahler system in mind as the basic 4-EES, it is convenient
to take the associated notation:
$$\beta_1 \equiv n,\qquad \beta_2 \equiv m,\qquad \lambda\equiv 1.$$
In other words, the differential relation (\ref{cuadratura2}) reads
\begin{equation}\label{cuadraturadiferencial34}
{\rm d}v = \frac{{\rm d}v^*}{\sqrt{(1-n\sin^2 v^*)(1-m\sin^2 v^*)}}
\end{equation}
The quadrature takes the form
\begin{equation}
v=G(v^*,n,m)=\int_0^{v^*}\!\!\!\frac{d\vartheta}{\sqrt{(1-n\sin^2\vartheta)(1-m\sin^2\vartheta})}, 
\end{equation}
Thus, we define the period as the two-parameters function
\begin{equation}
G(\pi/2,n,m)=\int_0^{\pi/2}\frac{d\vartheta}{\sqrt{(1-n\sin^2\vartheta)(1-m\sin^2\vartheta})}
\end{equation}
Thus, when $(n,m)=(0,0)$, we have $G(0,0)=\pi/2$, and when $(n,m)=(1,1)$, we have $G(1,1)=\infty$.

When $(m,n)$ are small, if we carry out the Taylor expansion of the integrand, after the evaluation of the quadratures, $G(n,m)$ may be approximated in the form
\begin{eqnarray*}
&& G(n,m)=\\    
&&\hspace{0.2cm}\frac{\pi}{2}\Big[1 +\frac{m}{4}  + \frac{9 m^2}{64} + \frac{25 m^3}{256} + \frac{1225 m^4}{16384} \\
&&\hspace{0.7cm}+\frac{n}{4}\Big(1 +\frac{3m}{8}  + \frac{15 m^2}{64} + \frac{175 m^3}{1024} + \frac{2205 m^4}{16384}\Big) \\
&&\hspace{0.7cm}+\frac{9n^2}{64}\Big(1 +\frac{5m}{12}  + \frac{35 m^2}{128} + \frac{105 m^3}{512} + \frac{2695 m^4}{16384}\Big) \\
&&\hspace{0.7cm}+ \frac{25n^3}{256} \Big(1 +  \frac{7 m }{16} +\frac{189 m^2 }{640} + \frac{231 m^3 }{1024} + \frac{3003 m^4 }{16384}\Big) \\
 &&\hspace{0.7cm}+\frac{1225n^4}{16384}\Big(1+ \frac{9 m}{20} + \frac{99 m^2}{320}+ \frac{429 m^3}{1792} + \frac{6435 m^4}{32768}\Big)\Big]\\
 &&\hspace{0.7cm}+\rm{h.o.t.}
\end{eqnarray*}
although the previous expression may be written in different form making more explicit its symmetric character with respect to  $m$ and $n$.

Now  we define the {\sl generalized amplitud} {\rm amg}
as the inverse function
\begin{equation}
v^*= {\rm amg}(v; n,m).
\end{equation}
Thus, considering the expressions  (\ref{trig4}), we have
\begin{equation}
\sin\, v^*= \sin\,{\rm amg}(v,n,m)\equiv {\rm sng}(v,n,m)
\end{equation}
and
\begin{equation}
\cos\, v^*= \cos\,{\rm amg}(v,n,m)\equiv {\rm cng}(v,n,m)
\end{equation}
\par\noindent
$\bullet$ There is an alternative way of proceeding. If we consider the change of variable 
$\sin \vartheta=x$ it allows to follow the steps of Jacobi for the case $N=3$.
Then, the differential relation (\ref{cuadraturadiferencial34}) takes the form
\begin{equation}\label{cuadratura22}
{\rm d}v = \frac{{\rm d}x}{ \sqrt{(1-x^2)(1-n\,x^2) (1-m\,x^2) }}
\end{equation}
or, inverting the expression 
\begin{equation}\label{ed1}
 \frac{{\rm d}x}{{\rm d}v}=\sqrt{(1-x^2)(1-n\,x^2)(1-m\,x^2)}.
\end{equation}
In other words, we define the function ${\rm sng}$
\begin{equation}\label{sng}
x=x(v;n,m)={\rm sng}(v;n,m)
\end{equation}
as the two-parameters function (whose range is made more precise below), solution of the differential equation 
\begin{equation}
 \Big(\frac{{\rm d}x}{{\rm d}v}\Big)^2 = (1-x^2)(1-n\,x^2)(1-m\,x^2).
\end{equation}
In this paper we will restrict to a range $ n\leq m\leq 1$. 
\medskip\par\noindent
$\bullet$ Then, associated with ${\rm sng}$ we propose the following functions
\begin{eqnarray}
&&{\rm cng}(v;n,m)=\pm\sqrt{1-{\rm sng}^2(v;n,m)},\label{cng}\\[1.2ex]
&&{\rm dng}(v;n,m)=\sqrt{1-m\,{\rm sng}^2(v;n,m)},\label{dng}\\[1.2ex]
&&\,{\rm fng}(v;n,m)=\sqrt{1-n\,{\rm sng}^2(v;n,m)}.\label{fng}
\end{eqnarray}
To simplify notation we will write ${\rm sng}(v;n,m)\equiv {\rm sng}$, etc. Examples of the graph of these new functions can be seen in Figs.~\ref{fig:Mahlern01m08} and \ref{fig:Mahlernmenos2m05}.
\begin{figure}[h!]
\centering
{\label{fig:SeccionesEenergia}\includegraphics[scale=1.3]{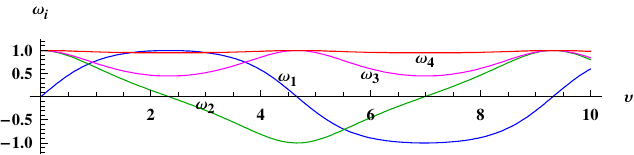}}
\vspace{-0.4cm}
\caption{\small Mahler $n=0.1, m=0.8$. Falta otra con valor m\'as extremo de $n=m$}
\label{fig:Mahlern01m08}
\end{figure}
\begin{figure}[h!]
\centering
{\label{fig:SeccionesEenergia}\includegraphics[scale=1.3]{GraficoMahlerNmenos2M05.pdf}}
\vspace{-0.4cm}
\caption{\small Mahler $n=-2, m=0.5$. Falta otra con valor m\'as extremo de $n=m$}
\label{fig:Mahlernmenos2m05}
\end{figure}

Due to the process we have followed, we immediately check that these functions ${\rm sng}$, etc verify the following IVP
\begin{equation}\label{sistemasng}
\begin{array}{l}
\displaystyle{\frac{{\rm d}\,{\rm sng}}{{\rm d}v}= {\rm cng}\,{\rm dng}\,{\rm fng}}, \\[1.5ex] 
\displaystyle{\frac{{\rm d}\,{\rm cng}}{{\rm d}v}= -{\rm sng}\,{\rm dng}\,{\rm fng}},\\[1.5ex]
\displaystyle{\frac{{\rm d}\,{\rm dng}}{{\rm d}v}= -m\,{\rm sng}\,{\rm cng}\,{\rm fng}},\\[1.5ex]
\displaystyle{\frac{{\rm d}\,{\rm fng}}{{\rm d}v}= -n\,{\rm sng}\,{\rm cng}\,{\rm dng}},
\end{array}
\end{equation}
with initial conditions $(0,1,1,1)$. The integrals, as we have mentioned before, lead to the following expressions
\begin{equation}
{\rm cng}^2+{\rm sng}^2=1, \quad {\rm dng}^2+m\,{\rm sng}^2=1, \quad 
{\rm fng}^2+n\,{\rm sng}^2=1.
\end{equation}
 Thus, from the functions solution of the Mahler system, the Jacobi functions   are given by
\begin{equation}\label{JacobiMahler}
\begin{array}{l}
\displaystyle{{\rm sn}(av;m_1)=\frac{1}{A}\,\frac{{\rm sng}(v;m,n)}{{\rm fng}(v;m,n)}}, \\[2ex]
\displaystyle{{\rm cn}(av;m_1)= \frac{{\rm cng}(v;m,n)}{{\rm fng}(v;m,n)},} \\[2ex]
\displaystyle{{\rm dn}(av;m_1)=\frac{{\rm dng}(v;m,n)}{{\rm fng}(v;m,n)}, }
\end{array}
\end{equation}
\noindent
$\bullet$ {\sl Taylor expansions of ${\rm sng},\, {\rm cng},\, {\rm dng}$ and ${\rm fng}$ near the origin. }
\medskip\par\noindent
As a direct application of the definition of those functions by the differential system (\ref{sistemasng}), we may easily compute to any order the Taylor expansion of the previous functions:
\begin{equation}\label{algoritmo2}
\begin{array}{l}
\displaystyle{{\rm sng}(v)= v - \frac{1 + m + n}{6}\,v^3}\\[1.5ex]
\displaystyle{\hspace{0.5cm}+\frac{1 + 14(m + n + m n)+ m^2+ n^2}{120}\,v^5 +\ldots}\\[1.5ex]
\displaystyle{{\rm cng}(v)= 1 - \frac{1}{2}\,v^2+\frac{1 + 4 m + 4 n}{24}\,v^4}\\[1.5ex]
\displaystyle{\hspace{0.5cm}-\frac{1 + 44 (m +n)+ 16 m^2 + 104 m n + 16n^2}{720}\,v^6 +\ldots}\\[1.5ex]
\displaystyle{{\rm dng}(v)= 1 - \frac{m}{2}\,v^2+\frac{m(4 + m + 4 n)}{24}\,v^4}\\
\displaystyle{\hspace{0.5cm}-\frac{m(16 + 44 m + m^2 + 104 n + 44 m n + 16 n^2)}{720}\,v^6+\ldots}\\[1.5ex]
\displaystyle{{\rm fng}(v)= 1 - \frac{n}{2}\,v^2+\frac{n(4 + n + 4 m  )}{24}\,v^4}\\[1.5ex]
\displaystyle{\hspace{0.5cm}-\frac{n(16 + 44 n + n^2+ 104 m + 44 m n + 16 m^2)}{720}\,v^6+\ldots}
\end{array}
\end{equation}
\begin{remark}
The interest of these expansions is connected with the computation of these functions. By extension of the process followed by Bulirsch and Fukushima computing Jacobi elliptic functions (see Appendix). Nevertheless, there is still work to be done comparing that scheme with the possible advantages of using regularization.
\end{remark}
\subsection{Particular cases}
\label{sec:particularcases}
\begin{description}
\item $\bullet$ $n=0$. In this case, due to the choice of the initial conditions, we have ${\rm fng}(v)\equiv 1$. Moreover we have ${\rm sng}(v;0,m)= {\rm sn}(v,m)$, etc, {\it i.e.} the Jacobi elliptic functions with  elliptic modulus $m$.
\item $\bullet$ $m=0$. Here, based on the initial conditions, we have ${\rm dng}(v)\equiv 1$. Moreover ${\rm sng}(v;n,0)= {\rm sn}(v,n)$, etc, {\it i.e.} the Jacobi elliptic functions have an  elliptic modulus $n$ (que es negativo, thus we still needs to make a transformation; see (\ref{valores}) leading to $m_1$).

\item $\bullet$ $m=1$. In this case the differential equation is

\begin{equation}
 \frac{{\rm d}x}{{\rm d}v} = (1-x^2)\sqrt{1-n\,x^2}.
\end{equation}
For this quadrature we obtain
{\small
\begin{equation}
v=\frac{1}{2\sqrt{1-n}}{\rm ln}\frac{(1+x)}{(1-x)}\frac{(1-n x + \sqrt{(1-n)(1-n x^2)})}{(1+ n x + \sqrt{(1-n)(1-n x^2)})},
\end{equation}}
whose inversion is possible, because it is injective. 
\item $\bullet$ $m=n$. Now the differential equation is 
\begin{equation} \frac{{\rm d}x}{{\rm d}v} = (1-m x^2)\sqrt{1-x^2}.
\end{equation}
We obtain 
\begin{equation}
v=\frac{1}{\sqrt{1-m}} {\rm ArcTan}\Big(\sqrt{1-m}\frac{x}{\sqrt{1-x^2}}\Big)
\end{equation}
Again the inversion is possible because it is injective 
\begin{equation}
\tan(\sqrt{1-m}\,v) =\sqrt{1-m}\frac{x}{\sqrt{1-x^2}}
\end{equation}
More precisely, we have
\begin{equation}
x=\frac{\tan(\sqrt{1-m}\,w)}{\sqrt{1-m+\tan^2(\sqrt{1-m}\,v)}}
\end{equation}

Graphical examples for $n=m$ can be seen in Figs.~\ref{fig:Mahlernm05} and \ref{fig:Mahlernm095}.
\begin{figure}[h!]
\centering
{\label{fig:SeccionesEenergia}\includegraphics[scale=1.3]{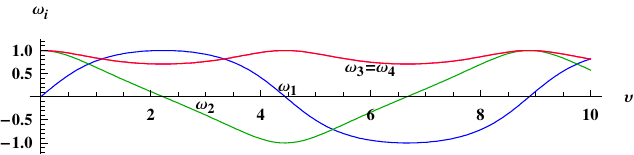}}
\vspace{-0.4cm}
\caption{\small Mahler $m=n=0.5$. Falta otra con valor m\'as extremo de $n=m$}
\label{fig:Mahlernm05}
\end{figure}
\begin{figure}[h!]
\centering
{\label{fig:SeccionesEenergia}\includegraphics[scale=1.3]{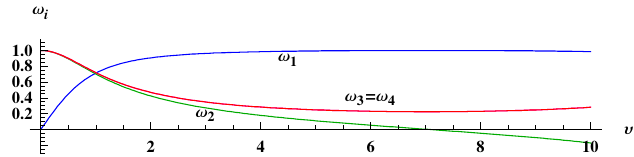}}
\vspace{-0.4cm}
\caption{\small Mahler $m=n=0.95$. Falta otra con valor m\'as extremo de $n=m$}
\label{fig:Mahlernm095}
\end{figure}
\item $\bullet$ $m=n=1$. In this case
\begin{equation}
v=\frac{x}{\sqrt{1-x^2}} 
\end{equation}
and finally, after inversion, it results
\begin{equation}
x=\frac{v}{\sqrt{1+v^2}}
\end{equation}

\item $\bullet$ $m=n=0$. In this case we recover the circular functions.

\item There are other particular cases related to unbounded trajectories, like the straight-lines which are expressed by elementary functions. This requires the signs of the coefficients to be the same, something that we have excluded when choosing our system.
\end{description}

\section{Addition formulas}
\label{sec:additionformulas}
In order to alleviate the notation, we introduce the following convention 
\begin{equation*}
\begin{array}{l}
\rm{sng}(a\,x;m,n)=\rm{s}_{ax},\quad\rm{cng}(a\,x;m,n)=\rm{c}_{ax},\\
\rm{dng}(a\,x;m,n)=\rm{d}_{ax},\quad\rm{fng}(a\,x;m,n)=\rm{f}_{ax}.
\end{array}
\end{equation*}
\begin{theorem}[Addition-Subtraction formulae for the 4-Mahler functions]
The addition and subtraction formulae for the 4-Mahler functions are given next. 
\begin{eqnarray}
&&\rm{sng}(x\pm y;m,n)=\label{eq:AdditionSubtraction4Mahler}\\
&&\dfrac{A\,(\rm{s}_{ax}\rm{c}_{ay}\rm{d}_{ay}\rm{f}_{ax}\pm \rm{s}_{ay}\rm{c}_{ax}\rm{d}_{ax}\rm{f}_{ay})}{\sqrt{(\rm{f}^2_{ax}\rm{f}^2_{ay}-m_1\rm{s}^2_{ax}\rm{s}^2_{ay})^2-n_1(\rm{s}_{ax}\rm{c}_{ay}\rm{d}_{ay}\rm{f}_{ax}\pm \rm{s}_{ay}\rm{c}_{ax}\rm{d}_{ax}\rm{f}_{ay})^2}}\nonumber\\[2ex]
&&\rm{cng}(x\pm y;m,n)=\nonumber\\
&&\dfrac{\rm{c}_{ax}\rm{c}_{ay}\rm{f}_{ax}\rm{f}_{ay}\mp \rm{s}_{ax}\rm{s}_{ay}\rm{d}_{ax}\rm{d}_{ay}}{\sqrt{(\rm{f}^2_{ax}\rm{f}^2_{ay}-m_1\rm{s}^2_{ax}\rm{s}^2_{ay})^2-n_1(\rm{s}_{ax}\rm{c}_{ay}\rm{d}_{ay}\rm{f}_{ax}\pm \rm{s}_{ay}\rm{c}_{ax}\rm{d}_{ax}\rm{f}_{ay})^2}}\nonumber\\[2ex]
&&\rm{dng}(x\pm y;m,n)=\nonumber\\
&&\dfrac{\rm{d}_{ax}\rm{d}_{ay}\rm{f}_{ax}\rm{f}_{ay}\mp \rm{s}_{ax}\rm{s}_{ay}\rm{c}_{ax}\rm{c}_{ay}}{\sqrt{(\rm{f}^2_{ax}\rm{f}^2_{ay}-m_1\rm{s}^2_{ax}\rm{s}^2_{ay})^2-n_1(\rm{s}_{ax}\rm{c}_{ay}\rm{d}_{ay}\rm{f}_{ax}\pm \rm{s}_{ay}\rm{c}_{ax}\rm{d}_{ax}\rm{f}_{ay})^2}}\nonumber\\[2ex]
&&\rm{fng}(x\pm y;m,n)=\nonumber\\
&&\dfrac{\rm{f}^2_{ax}\rm{f}^2_{ay}-m_1\rm{s}^2_{ax}\rm{s}^2_{ay}}{\sqrt{(\rm{f}^2_{ax}\rm{f}^2_{ay}-m_1\rm{s}^2_{ax}\rm{s}^2_{ay})^2-n_1(\rm{s}_{ax}\rm{c}_{ay}\rm{d}_{ay}\rm{f}_{ax}\pm \rm{s}_{ay}\rm{c}_{ax}\rm{d}_{ax}\rm{f}_{ay})^2}}\nonumber
\end{eqnarray}
where $A$, $a$, $m_1$ and $n_1$ are given in formula (43) (en la proposici—n 5).
\end{theorem}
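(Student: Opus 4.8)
The plan is to transport the classical Jacobi addition--subtraction formulas through the change of functions of Proposition~\ref{propo:sistema4mn}. By that proposition each of ${\rm sng},{\rm cng},{\rm dng},{\rm fng}$ is the corresponding Jacobi function of argument $av$ and modulus $m_1$, divided by the single factor $\sqrt{1-n_1\,{\rm sn}^2(av\,|\,m_1)}={\rm fng}(v)^{-1}$; equivalently, (\ref{JacobiMahler}) expresses the Jacobi ${\rm sn},{\rm cn},{\rm dn}$ (modulus $m_1$, argument $av$) as ${\rm sng}/(A\,{\rm fng})$, ${\rm cng}/{\rm fng}$, ${\rm dng}/{\rm fng}$. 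So the statement is, in disguise, the Jacobi addition theorem together with the identity ${\rm fng}(v)^{-2}=1-n_1\,{\rm sn}^2(av\,|\,m_1)$, and I would only need to rewrite the result in the $\omega$-variables.

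Concretely, first I would write ${\rm sng}(x\pm y)=A\,{\rm sn}\!\bigl(a(x\pm y)\,|\,m_1\bigr)\,{\rm fng}(x\pm y)$, and likewise ${\rm cng}(x\pm y)={\rm cn}(a(x\pm y)\,|\,m_1)\,{\rm fng}(x\pm y)$ and ${\rm dng}(x\pm y)={\rm dn}(a(x\pm y)\,|\,m_1)\,{\rm fng}(x\pm y)$. Then I would insert the classical Jacobi addition formulas (see e.g. \cite{Lawden}) for ${\rm sn}(ax\pm ay)$, ${\rm cn}(ax\pm ay)$, ${\rm dn}(ax\pm ay)$ --- all three carrying the \emph{same} denominator $\delta:=1-m_1\,{\rm sn}^2(ax){\rm sn}^2(ay)$ --- and, crucially, compute the last factor from ${\rm fng}(x\pm y)^{-2}=1-n_1\,{\rm sn}^2(ax\pm ay\,|\,m_1)=\delta^{-2}\bigl(\delta^2-n_1\,\sigma^2\bigr)$, where $\sigma$ denotes the numerator of the Jacobi ${\rm sn}$ addition formula; hence ${\rm fng}(x\pm y)=\delta/\sqrt{\delta^2-n_1\sigma^2}$. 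Multiplying, the factor $\delta$ drops out of the first three lines and turns into the numerator of the ${\rm fng}$ line, while a single square root $\sqrt{\delta^2-n_1\sigma^2}$ is left in all four denominators. Finally I would re-express every Jacobi quantity at $ax,ay$ back in the Mahler functions at $x,y$ via (\ref{JacobiMahler}); using the oddness of ${\rm sn}$ and the evenness of ${\rm cn},{\rm dn}$ to treat the two signs at once, one finds that $\sigma$ becomes (a constant times) ${\rm s}_{ax}{\rm c}_{ay}{\rm d}_{ay}{\rm f}_{ax}\pm{\rm s}_{ay}{\rm c}_{ax}{\rm d}_{ax}{\rm f}_{ay}$, that the Jacobi ${\rm cn}$ and ${\rm dn}$ numerators become (constants times) ${\rm c}_{ax}{\rm c}_{ay}{\rm f}_{ax}{\rm f}_{ay}\mp{\rm s}_{ax}{\rm s}_{ay}{\rm d}_{ax}{\rm d}_{ay}$ and ${\rm d}_{ax}{\rm d}_{ay}{\rm f}_{ax}{\rm f}_{ay}\mp{\rm s}_{ax}{\rm s}_{ay}{\rm c}_{ax}{\rm c}_{ay}$, and that $\delta$ becomes (a constant times) ${\rm f}^2_{ax}{\rm f}^2_{ay}-m_1\,{\rm s}^2_{ax}{\rm s}^2_{ay}$; fed into $\sqrt{\delta^2-n_1\sigma^2}$ this produces the announced common denominator and, simultaneously, the ${\rm fng}$ numerator ${\rm f}^2_{ax}{\rm f}^2_{ay}-m_1\,{\rm s}^2_{ax}{\rm s}^2_{ay}$.

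The step that needs real care --- and the place where the proof really lives --- is the bookkeeping of the scaling constant $a$, the normalisation $A$, and the branches of the square roots defining ${\rm cng},{\rm dng},{\rm fng}$: these are exactly what pins down the precise positions of $A$, $m_1$, $n_1$ in the four formulas. Here one uses the values (\ref{valores}) and the quadratic integral relations ${\rm cng}^2+{\rm sng}^2=1$, ${\rm dng}^2+m\,{\rm sng}^2=1$, ${\rm fng}^2+n\,{\rm sng}^2=1$ (together with ${\rm sn}^2+{\rm cn}^2={\rm dn}^2+m_1{\rm sn}^2=1$ for the modulus $m_1$) to bring $\delta=1-m_1{\rm sn}^2(ax){\rm sn}^2(ay)$ into the form ${\rm f}^2_{ax}{\rm f}^2_{ay}-m_1\,{\rm s}^2_{ax}{\rm s}^2_{ay}$; the branch choices are fixed by continuity together with evaluation at $x=0$ (where the identities reduce to ${\rm sng}(y),\dots,{\rm fng}(y)$). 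Everything else is the mechanical substitution just described.

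As an independent check I would note that, for fixed $y$, the left-hand sides $x\mapsto\bigl({\rm sng}(x+y),{\rm cng}(x+y),{\rm dng}(x+y),{\rm fng}(x+y)\bigr)$ solve the Mahler IVP (\ref{sistemasng}) with data $\bigl({\rm sng}(y),{\rm cng}(y),{\rm dng}(y),{\rm fng}(y)\bigr)$ at $x=0$, so by uniqueness it would be enough to verify that the proposed right-hand sides satisfy the same system with the same data; but this verification, though feasible, is far more laborious than the reduction to Jacobi, so I would keep it only as a sanity test.
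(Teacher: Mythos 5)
Your argument is the same as the paper's: both start from the representation of Proposition~\ref{propo:sistema4mn} (equivalently (\ref{JacobiMahler})), substitute the classical Jacobi addition formulae with their common denominator $1-m_1{\rm sn}^2(ax){\rm sn}^2(ay)$ into numerator and the square-root factor $\sqrt{1-n_1{\rm sn}^2(a(x\pm y))}$, cancel that denominator, and then convert back to the Mahler functions by multiplying through by ${\rm fng}^2(x){\rm fng}^2(y)$. The proposal is correct and essentially reproduces the paper's proof (the uniqueness-of-IVP check you append is extra, not needed).
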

\begin{proof}
Let us prove the formula corresponding to $\rm{sng}(x\pm y;m,n)$, the remaining ones are analogous. By Proposition~5 we have that
$$\rm{sng}(x\pm y;m,n)=A\dfrac{\rm{sn}(ax+ay\,;m_1)}{\sqrt{1-n_1\rm{sn}^2(ax+ay\,;m_1)}}.$$
Thus, using the addition and subtraction formulae for the Jacobi elliptic sine (see Appendix B) and assuming the following convention 
$$\rm{sn}(a\,x;m_1)=\rm{s}_{x},\quad\rm{cn}(a\,x;m_1)=\rm{c}_{x},\quad\rm{dn}(a\,x;m_1)=\rm{d}_{x},$$
we obtain
\begin{equation*}
\begin{array}{l}
\rm{sng}(x\pm y;m,n)=\\
\hspace{1cm}A\dfrac{\dfrac{\rm{s}_{x}\rm{c}_{y}\rm{d}_{y}\pm\rm{s}_{y}\rm{c}_{x}\rm{d}_{x}}{1-m_1\rm{s}_{x}^2\rm{s}^2_y}}{\sqrt{\dfrac{(1-m_1\rm{s}_{x}^2\rm{s}_{y}^2)^2-n_1(\rm{s}_x\rm{c}_y\rm{d}_y\pm\rm{s}_y\rm{c}_x\rm{d}_x)^2}{(1-m_1\rm{s}_{x}^2\rm{s}^2_y)^2}}},
\end{array}
\end{equation*}
simplifying denominators
\begin{equation}
\label{eq:AuxDoubleSine}
\begin{array}{l}
\rm{sng}(x\pm y;m,n)=\\
\hspace{0.8cm}A\dfrac{\rm{s}_{x}\rm{c}_{y}\rm{d}_{y}\pm\rm{s}_{y}\rm{c}_{x}\rm{d}_{x}}{\sqrt{(1-m_1\rm{s}_{x}^2\rm{s}_{y}^2)^2-n_1(\rm{s}_x\rm{c}_y\rm{d}_y\pm\rm{s}_y\rm{c}_x\rm{d}_x)^2}}.
\end{array}
\end{equation}
Finally, recalling that
\begin{eqnarray*}
&&\rm{s}_{x}=\frac{1}{A}\dfrac{\rm{sng}(ax;m,n)}{\rm{fng}(ax;m,n)}\\
&&\rm{c}_{x}=\dfrac{\rm{cng}(ax;m,n)}{\rm{fng}(ax;m,n)}\\
&&\rm{d}_{x}=\dfrac{\rm{sng}(ax;m,n)}{\rm{fng}(ax;m,n)},
\end{eqnarray*}
and likewise for $\rm{s}_{y}, \rm{c}_{y},\rm{d}_{y}$, if we multiply  numerator and  denominator in (\ref{eq:AuxDoubleSine}) by $\rm{fng}^2(ax;m,n)$ and $\rm{fng}^2(ay;m,n)$ we obtain (\ref{eq:AdditionSubtraction4Mahler}) after algebraic simplifications.
\end{proof}

\begin{corollary}
The formulae for the double angle of the 4-Mahler functions are given by
\begin{gather}
\begin{aligned}
\label{eq:Double4Mahler}
\rm{sng}(2x;m,n)&=&\dfrac{2A\,\rm{s}_{ax}\rm{c}_{ax}\rm{d}_{ax}\rm{f}_{ax}}{\sqrt{(\rm{f}_{ax}^4-m_1\,\rm{s}_{ax}^4)^2-n_1\,\big(2\,\rm{s}_{ax}\rm{c}_{ax}\rm{d}_{ax}\rm{f}_{ax}\big)^2}}\\
\rm{cng}(2x;m,n)&=&\dfrac{\rm{c}^2_{ax}\rm{f}^2_{ax}\mp \rm{s}^2_{ax}\rm{d}^2_{ax}}{\sqrt{(\rm{f}_{ax}^4-m_1\,\rm{s}_{ax}^4)^2-n_1\,\big(2\,\rm{s}_{ax}\rm{c}_{ax}\rm{d}_{ax}\rm{f}_{ax}\big)^2}}\\
\rm{dng}(2x;m,n)&=&\dfrac{\rm{d}_{ax}^2\rm{f}_{ax}^2\mp \rm{s}_{ax}^2\rm{c}_{ax}^2}{\sqrt{(\rm{f}_{ax}^4-m_1\,\rm{s}_{ax}^4)^2-n_1\,\big(2\,\rm{s}_{ax}\rm{c}_{ax}\rm{d}_{ax}\rm{f}_{ax}\big)^2}}\\
\rm{fng}(2x;m,n)&=&\dfrac{\rm{f}^4_{ax}-m_1\rm{s}^4_{ax}}{\sqrt{(\rm{f}_{ax}^4-m_1\,\rm{s}_{ax}^4)^2-n_1\,\big(2\,\rm{s}_{ax}\rm{c}_{ax}\rm{d}_{ax}\rm{f}_{ax}\big)^2}}
\end{aligned}
\end{gather}
\end{corollary}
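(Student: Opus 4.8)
The plan is to obtain the Corollary directly from the Theorem by the diagonal specialization $y\to x$, taking the upper sign throughout. First I would set $y=x$ in the four identities of the Theorem and track the three expressions that recur in the numerators and in the common radical. With $y=x$ and the $+$ sign, the cross term $\mathrm{s}_{ax}\mathrm{c}_{ay}\mathrm{d}_{ay}\mathrm{f}_{ax}+\mathrm{s}_{ay}\mathrm{c}_{ax}\mathrm{d}_{ax}\mathrm{f}_{ay}$ collapses to $2\,\mathrm{s}_{ax}\mathrm{c}_{ax}\mathrm{d}_{ax}\mathrm{f}_{ax}$; the symmetric term $\mathrm{f}^2_{ax}\mathrm{f}^2_{ay}-m_1\mathrm{s}^2_{ax}\mathrm{s}^2_{ay}$ becomes $\mathrm{f}^4_{ax}-m_1\mathrm{s}^4_{ax}$; and the mixed numerators of $\mathrm{cng}$ and $\mathrm{dng}$ become $\mathrm{c}^2_{ax}\mathrm{f}^2_{ax}-\mathrm{s}^2_{ax}\mathrm{d}^2_{ax}$ and $\mathrm{d}^2_{ax}\mathrm{f}^2_{ax}-\mathrm{s}^2_{ax}\mathrm{c}^2_{ax}$, respectively. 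Substituting these into the common radical $\sqrt{(\mathrm{f}^2_{ax}\mathrm{f}^2_{ay}-m_1\mathrm{s}^2_{ax}\mathrm{s}^2_{ay})^2-n_1(\cdots)^2}$ of (\ref{eq:AdditionSubtraction4Mahler}) produces exactly $\sqrt{(\mathrm{f}^4_{ax}-m_1\mathrm{s}^4_{ax})^2-n_1(2\,\mathrm{s}_{ax}\mathrm{c}_{ax}\mathrm{d}_{ax}\mathrm{f}_{ax})^2}$, and reading off the four numerators yields (\ref{eq:Double4Mahler}). That is the whole argument.

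An equivalent self-contained route would mimic the proof of the Theorem. By Proposition~\ref{propo:sistema4mn} one has $\mathrm{sng}(2x;m,n)=A\,\mathrm{sn}(2ax\,|\,m_1)/\sqrt{1-n_1\,\mathrm{sn}^2(2ax\,|\,m_1)}$, and likewise for $\mathrm{cng},\mathrm{dng},\mathrm{fng}$; into these one inserts the classical double-argument formulae for $\mathrm{sn},\mathrm{cn},\mathrm{dn}$ over their common denominator $1-m_1\,\mathrm{sn}^4(ax\,|\,m_1)$ (Appendix B), and then re-expresses $\mathrm{sn},\mathrm{cn},\mathrm{dn}$ through $\mathrm{sng},\mathrm{cng},\mathrm{dng},\mathrm{fng}$ by (\ref{JacobiMahler}), clearing the resulting powers of $\mathrm{fng}$. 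This reproduces the same four formulae and simultaneously serves as an internal cross-check that the specialization of the Theorem has been carried out correctly.

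There is essentially no real obstacle: the content is a routine substitution, and the only point requiring attention is the sign bookkeeping. The $\mp$ surviving in the $\mathrm{cng}$ and $\mathrm{dng}$ rows of (\ref{eq:Double4Mahler}) is to be read as the minus sign inherited from the upper-sign choice in the Theorem, whereas the $\pm$ inside the $n_1$-term under the radical is immaterial since that term enters squared. I would also note in passing that feeding the subtraction version ($y=x$, lower sign) into the Theorem returns $\mathrm{sng}(0;m,n)=0$ and $\mathrm{cng}(0;m,n)=\mathrm{dng}(0;m,n)=\mathrm{fng}(0;m,n)=1$, which is a further consistency check on the addition formulae themselves.
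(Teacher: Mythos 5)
Your proposal is correct and matches the paper's (implicit) argument: the corollary is stated as an immediate specialization $y=x$ with the upper sign in the addition formulae of the Theorem, which is exactly what you carry out. Your remark that the residual $\mp$ in the $\mathrm{cng}$ and $\mathrm{dng}$ rows should be read as a minus sign inherited from the upper-sign choice is the right reading of the statement as printed.
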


\begin{corollary}
The formulae for the half angle of the 4-Mahler functions are given by
\begin{gather}
\begin{aligned}
\label{eq:Half4Mahler}
\rm{sng}(\frac{x}{2};m,n)=\;&A\,\sqrt{\dfrac{\rm{f}_{ax}-\rm{c}_{ax}}{\rm{f}_{ax}+\rm{d}_{ax}-n_1(\rm{f}_{ax}-\rm{c}_{ax})}}\\
\rm{cng}(\frac{x}{2};m,n)=\;&\sqrt{\dfrac{\rm{d}_{ax}+\rm{c}_{ax}}{\rm{f}_{ax}+\rm{d}_{ax}-n_1(\rm{f}_{ax}-\rm{c}_{ax})}}\\
\rm{dng}(\frac{x}{2};m,n)=\;&\sqrt{\dfrac{(\rm{c}_{ax}+\rm{d}_{ax})(\rm{f}_{ax}+\rm{d}_{ax})}{(\rm{f}_{ax}+\rm{c}_{ax})(\rm{f}_{ax}+\rm{d}_{ax})-n_1(\rm{f}_{ax}^2-\rm{c}_{ax}^2)}}\\
\rm{fng}(\frac{x}{2};m,n)=\;&\sqrt{\dfrac{\rm{f}_{ax}+\rm{d}_{ax}}{\rm{f}_{ax}+\rm{d}_{ax}-n_1(\rm{f}_{ax}-\rm{c}_{ax})}}
\end{aligned}
\end{gather}
\end{corollary}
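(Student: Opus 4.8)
The plan is to reduce the half-angle formulae for the 4-Mahler functions to the already-established double-angle formulae, exactly mirroring the way the addition theorem was obtained from the Jacobi addition theorem. First I would set $y = x/2$ (or equivalently replace $x$ by $x/2$) in the double-angle identities of the previous Corollary, so that the right-hand sides involve $\rm{s}_{ax/2}$, $\rm{c}_{ax/2}$, $\rm{d}_{ax/2}$, $\rm{f}_{ax/2}$ and the left-hand sides are $\rm{sng}(x;m,n)$, etc. The goal is then to invert these relations, expressing the half-argument functions in terms of the full-argument functions $\rm{c}_{ax}$, $\rm{d}_{ax}$, $\rm{f}_{ax}$.

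The key algebraic step is to exploit the quadratic first integrals $\rm{cng}^2 + \rm{sng}^2 = 1$, $\rm{dng}^2 + m\,\rm{sng}^2 = 1$, $\rm{fng}^2 + n\,\rm{sng}^2 = 1$, together with the translation back to Jacobi functions via Proposition~\ref{propo:sistema4mn}. Concretely, since $\rm{sng}(x;m,n) = A\,\rm{sn}(ax;m_1)/\sqrt{1 - n_1\rm{sn}^2(ax;m_1)}$ and similarly for the others, I would first recall the classical Jacobi half-angle formulae (Appendix B), which give $\rm{sn}^2(ax/2;m_1)$, $\rm{cn}^2(ax/2;m_1)$, $\rm{dn}^2(ax/2;m_1)$ as rational expressions in $\rm{cn}(ax;m_1)$ and $\rm{dn}(ax;m_1)$ — typically of the form $(1-\rm{cn})/(1+\rm{dn})$ and the like. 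Substituting the Mahler-to-Jacobi dictionary $\rm{c}_{ax} = \rm{cng}(ax)/\rm{fng}(ax)$, $\rm{d}_{ax} = \rm{dng}(ax)/\rm{fng}(ax)$ and $A^2\rm{s}_{ax}^2 = \rm{sng}^2(ax)/\rm{fng}^2(ax)$ into these expressions, and clearing the common denominator $\rm{fng}(ax)$, should produce precisely the claimed combinations $\rm{f}_{ax} \pm \rm{c}_{ax}$, $\rm{c}_{ax} + \rm{d}_{ax}$, and the denominator $\rm{f}_{ax} + \rm{d}_{ax} - n_1(\rm{f}_{ax} - \rm{c}_{ax})$. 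The factor $A$ in front of $\rm{sng}(x/2)$ is exactly the $1/A$ in the Jacobi dictionary for $\rm{s}$ being undone.

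A cleaner alternative, which I would present as the actual proof, is to verify the four formulae directly: square each proposed expression for $\rm{sng}(x/2), \rm{cng}(x/2), \rm{dng}(x/2), \rm{fng}(x/2)$, then check (i) that the three quadratic integrals $\rm{cng}^2 + \rm{sng}^2 = 1$, $\rm{dng}^2 + m\,\rm{sng}^2 = 1$, $\rm{fng}^2 + n\,\rm{sng}^2 = 1$ hold identically as functions of $\rm{c}_{ax}, \rm{d}_{ax}, \rm{f}_{ax}$ (using $\rm{c}_{ax}^2 + A^2\rm{s}_{ax}^2 = 1$ etc. to eliminate $\rm{s}_{ax}$), and (ii) that applying the double-angle Corollary to the squared half-angle expressions returns $\rm{sng}^2(x), \rm{cng}(x), \dots$ correctly, which pins down the correct branch. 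Since a 4-EES solution is uniquely determined by its value and that of its derivative at a point, agreement of the squares plus the sign at $x=0$ suffices.

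The main obstacle is the branch and sign bookkeeping: the half-angle expressions carry square roots, so one must argue that the radicands are nonnegative on the relevant range $n \le m \le 1$ (this follows from $\rm{f}_{ax} \ge \rm{d}_{ax} \ge \rm{c}_{ax}$-type orderings implied by the integrals and the choice of initial conditions $(0,1,1,1)$), and that the positive root is the correct one on a neighbourhood of $x = 0$, extended by continuity. I expect the pure algebra — showing the three conic integrals are satisfied by the proposed rational forms — to be routine but slightly tedious, so I would organise it by first eliminating $\rm{s}_{ax}$ everywhere in favour of $\rm{c}_{ax}, \rm{d}_{ax}, \rm{f}_{ax}$ (three of these are linked by $m_1, n_1$, leaving essentially one free variable plus parameters), reducing each identity to a polynomial identity in a single variable that can be checked by inspection.
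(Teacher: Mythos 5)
The paper states this corollary without proof, but your primary route — write $\mathrm{sng}(x/2)=A\,\mathrm{sn}(ax/2;m_1)/\sqrt{1-n_1\mathrm{sn}^2(ax/2;m_1)}$ etc., apply the classical Jacobi half-angle formulae, and translate back via $\mathrm{cn}(ax;m_1)=\mathrm{c}_{ax}/\mathrm{f}_{ax}$, $\mathrm{dn}(ax;m_1)=\mathrm{d}_{ax}/\mathrm{f}_{ax}$ — is exactly the dictionary argument the paper uses to prove the preceding addition theorem, and it does reproduce all four stated expressions (including the $(\mathrm{f}_{ax}+\mathrm{c}_{ax})$ factor that turns $n_1(\mathrm{f}_{ax}-\mathrm{c}_{ax})$ into $n_1(\mathrm{f}_{ax}^2-\mathrm{c}_{ax}^2)$ in the $\mathrm{dng}$ denominator). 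Your proposal is correct and essentially the paper's (implicit) approach; the sign discussion near $x=0$ is the only extra care needed.
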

%
\subsection{On the numerical computation of $\omega_i$ functions by extending Bulirsch-Fukushima method}
\label{sec:computing}
As we know Jacobi elliptic functions are defined by some ratios of $\theta_i$ Jacobi functions. This way of handling the Jacobi elliptic functions is  convenient due to the fast convergency of those series. Nevertheless, at present, fast numeric codes compete with this classical analytic approach. More precisely, in order to implement those codes {\sl addition formulas} compute Jacobi elliptic functions are basic expressions in that process (see Fukushima\cite{Fukushima2013,Fukushima2014}). 

 We can extend those expressions to the $\omega_i$ functions.  Thus, as Fukushima explains, the algorithm is made of three steps:\\
(i) the forward transformation defined by  (Corollary 2, Half arguments formulas: (\ref{eq:Half4Mahler}) reducing the values of $\omega_i$ by a number of iterations;\\
(ii) evaluation of the Mac-Laurin series expansions given by (\ref{algoritmo2}) and;\\ (iii) the backward transformation  (Corollary 1: Double arguments formulas  (\ref{eq:Double4Mahler})) as many times  as the forward transformation.

Details of the implementation of this process will be given in \cite{Crespo2015}.
\section{On the case $N=5$} 
\label{sec:N5}
As we have pointed out in the Introduction,  hyperelliptic integrals appear in (\ref{eq:EESn}) when $N\geq 5$. Thus it is convenient to see in some detail the case  $N=5$, the lower system belonging to this category. 

 Thus, as before, we start keeping the notation used in lower dimension
 \begin{equation}\label{sistema5}
 \begin{array}{l}
 \displaystyle{\omega_1'= \alpha_1\,\omega_2\,\omega_3\,\omega_4\,\omega_5}, \\\displaystyle{ \omega_2'= \alpha_2\,\omega_1\,\omega_3\,\omega_4\,\omega_5},\\\displaystyle{\omega_3'= \alpha_3\,\omega_1\,\omega_2\omega_4\,\omega_5},\\\displaystyle{ \omega_4'= \alpha_4\,\omega_1\,\omega_2\,\omega_3\,\omega_5},\\\displaystyle{ \omega_5'= \alpha_5\,\omega_1\,\omega_2\,\omega_3\,\omega_4},
 \end{array}
 \end{equation}
with given initial conditions $\omega(0)$. As examples in Figs. \ref{fig:MahlerP02N04M07}
and \ref{fig:MahlerPmenos2Nmenos1M04} we present two set of functions of the 
5-EES family.
\begin{figure}[h!]
\centering
{\label{fig:}\includegraphics[scale=1.3]{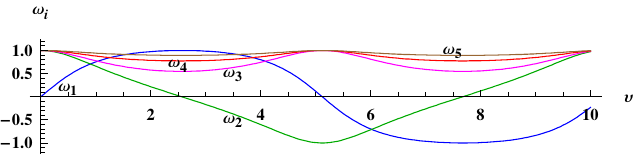}}
\vspace{-0.4cm}
\caption{\small 5-Mahler system graphs for $p=0.2, n=0.4, m=0.7$.}
\label{fig:MahlerP02N04M07}
\end{figure}
\begin{figure}[h!]
\centering
{\label{fig:}\includegraphics[scale=1.3]{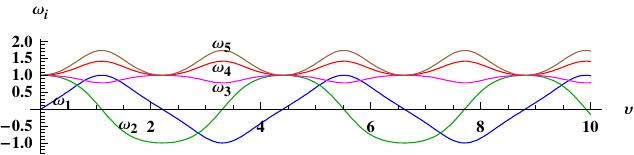}}
\vspace{-0.4cm}
\caption{\small 5-Mahler system graphs  for  $p=-2, n=-1, m=0.4$.}
\label{fig:MahlerPmenos2Nmenos1M04}
\end{figure}

We will proceed as in the lower dimensions $N=3,4$, considering alternative procedures to the classic solution based on the direct reduction to hyperelliptic integrals. In other words:\medskip\par\noindent
 (i) we  introduce the functions $u_i^j(v)$, (where we maintain the notation) ratios of the $\omega_i$
\begin{equation}\label{ratios5}
u_i^j=\frac{\omega_i}{\omega_j}, \quad  i\neq j, \qquad  u_j^j=\frac{1}{\omega_j},
\end{equation}
in the domain of definition of $\omega_j$.\medskip\par\noindent
(ii) In the rest of the section we will study the effect of the introduction of some possible regularizations, namely two of them
\begin{itemize}
\item $\displaystyle{{\rm d}v^* = \omega_5\,{\rm d}v.}$
\item $\displaystyle{{\rm d}v^*=\omega_3\,\omega_4\,\omega_5\,{\rm d}v.}$
\end{itemize}
Again, we have to keep in mind that with the notation used in the above regularizations, the new variable $v^*$ is different from one case to the other.
\subsection{The  ${\rm d}v^*/{\rm d}v=\omega_5$ regularization.}
\label{sec:omega5}
Then, associated to the ratios $u_i$, if we carry out the regularization
\begin{equation}\label{regularization5}
{\rm d}v =u_5\,{\rm d}v^*.
\end{equation}
 we have the following regularized differential system
\begin{equation}\label{sistema43}
\begin{array}{l}
\displaystyle{\frac{{\rm d}u_1}{{\rm d}v^*}= C_1^5\,u_2u_3u_4}, \\[1.5ex]
\displaystyle{\frac{{\rm d}u_2}{{\rm d}v^*}= C_2^5\,u_1u_3u_4},\\[1.5ex]
\displaystyle{\frac{{\rm d}u_3}{{\rm d}v^*}= C_3^5\,u_1u_2u_4},\\[1.5ex]
\displaystyle{\frac{{\rm d}u_4}{{\rm d}v^*}= C_4^5\,u_1u_2u_3},
\end{array}
\end{equation}
with IC $u_i(0)=u_i^0=\omega_i^0/\omega_j^0$, $i=1,\ldots, 4$. 

Thus, dividing the integral  $\alpha_1\omega_5^2-\alpha_5\omega_1^2=C_1^5$ by $\omega_5^2$ we write: $u_5^2=(\alpha_1-\alpha_5\,u_1^2)/C_1^5$. Then, we obtain
\begin{equation}\label{regularizacion5}
v= \sqrt{\frac{\alpha_1}{C_1^5}}\int\sqrt{1-n_2\,[u_1(v^*)]^2}{\rm d}v^*,
\end{equation}
where $n_2=\alpha_5/\alpha_1$ and $u_1(v^*)$ is a function solution of the system (\ref{sistema43}); quadrature which will be solved numerically. As we know that the  solution of (\ref{sistema43}) can be obtained by undetermined coefficients, making use of the $4$-{\sl Mahler functions} defined by the system (\ref{sistemasng}), but in the variable $v^*$. In other words, the previous form of the solution represents an alternative to the use of hyperelliptic integrals for solving (\ref{eq:EESn}) for $N=5$. Or, in a more precise form, we have separated geometry from dynamics. The trajectory is expressed by Jacobi or Mahler functions, meanwhile the quadrature of the parametrization (\ref{regularizacion5}) will lead generically to a  hyperellictic integral.
\subsection{The  ${\rm d}v^*/{\rm d}v=\omega_3\omega_4\omega_5$ regularization.}
\label{sec:omega345}
 Let us consider again the system 5-EES (\ref{sistema5}). Now we try the regularization
\begin{equation}\label{regularization345}
\frac{{\rm d}v^*}{{\rm d}v}=\omega_3\,\omega_4\,\omega_5.
\end{equation}
in a domain where $\omega_3\,\omega_4\,\omega_5\neq 0$. This means that the system reduces to 
\begin{equation}\label{generic22}
\frac{{\rm d}\omega_1}{{\rm d}v^*}= \alpha_1\,\omega_2, \quad
\frac{{\rm d}\omega_2}{{\rm d}v^*}= \alpha_2\,\omega_1,
\end{equation}
and three quadratures associated to $\omega_3$, $\omega_4$ and $\omega_5$. In fact, they are not needed because the integrals allow to write
$\omega_i^2=C_1^i-\alpha_i\omega_1^2$, $(i=3,4,5)$. Remember that  $C_1^i$ are constants, functions of the initial conditions.

Assuming the bounded case we can always choose, by scaling and transformation of functions, that  $\alpha_1=1,\, \alpha_2=-1$. In other words we have 
\begin{equation}
\omega_1(v^*)=\sin v^*, \qquad \omega_2(v^*)=\cos v^*,
\end{equation}
Then, the quadrature (\ref{regularization345}), taking into account the previous mentioned integrals, we have 
\begin{equation}\label{cuadratura3}
\lambda v = \int\frac{{\rm d}v^*}{\sqrt{\prod_{i=3}^5 (1-\beta_i\sin^2 v^*)}}
\end{equation}
where $\beta_i$ and $\lambda$ are functions of $C_1^i$ and $\alpha_i$.  This lead us, in the generic case, to a hyperelliptic quadrature.

\paragraph{Dealing with the $5$-Mahler System.} In what follows we choose as the basic system in $N=5$ a Mahler type system
\begin{equation}\label{Mahler5}
\begin{array}{l}
\displaystyle{\omega_1'= \omega_2\,\omega_3\,\omega_4\,\omega_5}, \\
\displaystyle{ \omega_2'= -\omega_1\,\omega_3\,\omega_4\,\omega_5},\\
\displaystyle{\omega_3'= -m\,\,\omega_1\omega_2\,\omega_4\,\omega_5},\\
\displaystyle{ \omega_4'= -n\,\omega_1\,\omega_2\,\omega_3\,\omega_5},\\
\displaystyle{ \omega_5'= -p\,\omega_1\,\omega_2\,\omega_3\,\omega_4},
\end{array}
\end{equation}
with initial conditions $(0,1,1,1,1)$. 

Moreover, apart from adjusting coefficients, an alternative form of dealing with (\ref{cuadratura3}) is to make a change of variable  $\sin v^* =x$. Then, the corresponding new expression for the regularization 
 is given by
\begin{equation}\label{cuadratura33}
\lambda\,{\rm d}v = \frac{{\rm d}x}{\sqrt{(1-x^2) (1-m\,x^2) (1-n\,x^2) (1-p\,x^2)}}.
\end{equation}
Denoting 
$$w=\lambda\,v$$ 
we define by {\rm Amg} (generalized amplitude) the inverse function
\begin{equation}\label{amplitudgeneralizada}
 v^*= {\rm Amg}(w;p,m,n).
\end{equation}
Then, by analogy with the notation introduced in lower dimensions, we propose to  write
\begin{equation}
\sin\,  v^*= \sin\,{\rm Amg}(w;p,n,m)\equiv {\rm Sng}(w;p,n,m)
\end{equation}
In other words, we define ${\rm Sng}$
\begin{equation}\label{sng}
x=x(w;p,n,m)={\rm Sng}(w;p,n,m)
\end{equation}
as the three-parameter function solution of the differential equation
\begin{equation}\label {ed1}
 \Big(\frac{{\rm d}x}{{\rm d}w}\Big)^2=(1-x^2)(1-p\,x^2)(1-n\,x^2)(1-m\,x^2).
\end{equation}
In the rest of this paper we restrict ourselves to the domain of parameters $\Delta=\{(p,n,m)\in [0,1]\times[0,1]\times[0,1]\}$.
\par
Then, associated with ${\rm Sng}$ we introduce the following functions
\begin{equation}\label{CngDngFngHng}
\begin{array}{l}
\displaystyle{{\rm Cng}(w;p,n,m)=\pm\sqrt{1-{\rm Sng}^2(w;p,n,m)},}\\[1.2ex]
\displaystyle{{\rm Dng}(w;p,n,m)=\sqrt{1-m\,{\rm Sng}^2(w;p,n,m)},}\\[1.2ex]
\displaystyle{{\rm Fng}(w;p,n,m)=\sqrt{1-n\,{\rm Sng}^2(w;p,n,m)},}\\[1.2ex]\,\displaystyle{{\rm Hng}(w;p,n,m)=\sqrt{1-p\,{\rm Sng}^2(w;p,n,m)}}.
\end{array}
\end{equation}
To simplify the notation, we will write in some expressions
\begin{eqnarray*}
&&{\rm Sng}(w;p,n,m)\equiv {\rm Sng}, \quad {\rm Cng}(w;p,n,m)\equiv {\rm Cng}, \\ &&{\rm Dng}(w;p,n,m)\equiv {\rm Dng}, \quad {\rm Fng}(w;p,n,m)\equiv {\rm Fng},\\
&&{\rm Hng}(w;p,n,m)\equiv {\rm Hng}.
\end{eqnarray*}
Then, we write again  (\ref{Mahler5}) as the following IVP
\begin{equation}\label{Mahler51}
\begin{array}{l}
\displaystyle{\frac{{\rm d}\,{\rm Sng}}{{\rm d}w}= {\rm Cng}\,{\rm Dng}\,{\rm Fng}\,{\rm Hng}}, \\  [1.8ex]
\displaystyle{\frac{{\rm d}\,{\rm Cng}}{{\rm d}w}= -{\rm Sng}\,{\rm Dng}\,{\rm Fng}\,{\rm Hng}}\\ [1.8ex]
\displaystyle{\frac{{\rm d}\,{\rm Dng}}{{\rm d}w}= -m\,{\rm Sng}\,{\rm Cng}\,{\rm Fng}\,{\rm Hng}}\\ [1.8ex]
\displaystyle{\frac{{\rm d}\,{\rm Fng}}{{\rm d}w}= -n\,{\rm Sng}\,{\rm Cng}\,{\rm Dng}\,{\rm Hng}},\\[1.8ex]
\displaystyle{\frac{{\rm d}\,{\rm Hng}}{{\rm d}w}= -p\,{\rm Sng}\,{\rm Cng}\,{\rm Dng}\,{\rm Fng}}
\end{array}
\end{equation}
with initial conditions (0,1,1,1,1). Note that in agreement with (\ref{CngDngFngHng}), the integrals take the following form
\begin{equation}
\begin{array}{l}
{\rm Cng}^2+{\rm Sng}^2=1, \qquad {\rm Dng}^2+m\,{\rm Sng}^2=1, \\[1.2ex]
{\rm Fng}^2+n\,{\rm Sng}^2=1, \qquad {\rm Hng}^2+p\,{\rm Sng}^2=1.
\end{array}
\end{equation}
We are not going to deal with the generic study of our system (\ref{Mahler51}). It is out of the scope of this paper. In the last Section we will restrict to analyze some particular cases
\section{$N=5$: Some particular cases} 
\label{sec:appendixpp}
Like in previous dimensions, we consider two particular cases
\subsection{The case $p=0$.} 
Now, according to (\ref{CngDngFngHng}), we have ${\rm Hng}\equiv 1$. This corresponds to the previous studied case: 4-Mahler system.
\subsection{The case $p=n$.} 
As we have just pointed out, a particular case of (\ref{cuadratura3}) we will consider now two of the $\beta_i$ equals. According to the notation introduced, 
we write 
\begin{equation}\label{cuadratura&Pi}
\lambda\,v=\int_0^{\tilde v^*}\frac{d\vartheta}{(1-n\sin^2\vartheta)\sqrt{1-m\sin^2\vartheta}}, 
\end{equation}
\begin{remark}
In relation to the quadrature (\ref{cuadratura&Pi}) the reader will remember  that this is precisely the Legendre third elliptic integral\footnote{Dealing with the search of fast numerical algorithms for the computation of the third elliptic integral Fukushima \cite{Fukushima2013,Fukushima2014} singles out in a recent paper that by a number of transformations the domain of $n$ and $m$ are reduced as
$$ 0 < m <1, \qquad -\sqrt{m} <n < \frac{m}{1+\sqrt{1-m}}. $$
This fact has to be in mind (incluir gr‡fico de este dominio) in order to study the $\omega_i$ thinking on  applications to those integrals\ldots} $\Pi(\tilde v^*;m,n)$. Thus, for the particular cases $n=0$ and $n=m$, we encounter the other Legendre elliptic integrals:
\begin{eqnarray*}
&&F(\varphi,m)=\int_0^{\varphi}\frac{d\vartheta}{\sqrt{1-m\sin^2\vartheta}}= \Pi(\varphi,0,m)\\
&&E(\varphi,m)=\int_0^{\varphi}\sqrt{1-m\sin^2\vartheta}\,d\vartheta,\\
&&\hspace{1.2cm}=(1-m)\,\Pi(\varphi,m,m) + m \frac{\sin(2\varphi)}{2\sqrt{1-m\sin^2\varphi}}.
\end{eqnarray*}
\end{remark} 
Denoting
$$w=\lambda\,v,$$ 
we define as {\rm Amg} (generalized amplitude) the inverse function
\begin{equation}
\tilde v^*= {\rm Amg}(w;n,n,m).
\end{equation}
Then, by analogy with the notation introduced in lower dimensions, we propose to  write
\begin{equation}
\sin\, \tilde v^*= \sin\,{\rm Amg}(w;n,n,m)\equiv {\rm Sng}(w;n,m)
\end{equation}
For later use, we also include here the expression for our particular case of (\ref{cuadratura33})
\begin{equation}\label{ed1}
{\rm d}w = \frac{{\rm d}x}{(1-n\,x^2) \sqrt{(1-x^2) (1-m\,x^2) }}.
\end{equation}
From our initial conditions  we  have ${\rm Hng}\equiv {\rm Fng}$. Then, from   (\ref{Mahler5}) we immediately obtain that $\omega_4\equiv \omega_5$, and that these functions satisfy the following IVP
\begin{equation}\label{regularizeSng00}
\begin{array}{l}
\displaystyle{\frac{{\rm d}\,{\rm Sng}}{{\rm d}w}= {\rm Cng}\,{\rm Dng}\,{\rm Fng}^2}, \\  [1.8ex]
\displaystyle{\frac{{\rm d}\,{\rm Cng}}{{\rm d}w}= -{\rm Sng}\,{\rm Dng}\,{\rm Fng}^2}\\ [1.8ex]
\displaystyle{\frac{{\rm d}\,{\rm Dng}}{{\rm d}w}= -m\,{\rm Sng}\,{\rm Cng}\,{\rm Fng}^2}\\ [1.8ex]
\displaystyle{\frac{{\rm d}\,{\rm Fng}}{{\rm d}w}= -n\,{\rm Sng}\,{\rm Cng}\,{\rm Dng}\,{\rm Fng}},
\end{array}
\end{equation}
con las condiciones iniciales (0,1,1,1). 

Again by a regularization  $w\rightarrow \tilde v$ given by
\begin{equation}\label{cuadraturapi0}
\frac{{\rm d}\tilde v}{{\rm d}w} = {\rm Fng}
\end{equation}
transforms (\ref{regularizeSng00}) in a regularized system 
%
which is a 4-Mahler system in the new variable. 

After we have solved the regularized system, we still need to compute  the quadrature associated to the differential relation  (\ref{cuadraturapi0}). Explicitly we have
\begin{equation}
{\rm d}w=\int\frac{{\rm d}\tilde v}{\sqrt{1-n \,{\rm Sng}^2(w(\tilde v))}} 
\end{equation}
We will give details of this process, both from the analytical and numerical point of view, in a forthcoming paper.
\section{On the application to the free rigid body}
\label{sec:FRB}
We will apply what we have presented in previous sections to the description of the solution of the free rigid body. We will do that formulating the system in symplectic Andoyer variables.

\subsection{The solution in Andoyer variables}
\label{sec:classical}
Let us consider the Hamiltonian of the free rigid body expressed in Andoyer's variables $(\lambda,\mu,\nu,\Lambda,M,N)$ which takes the form
\begin{equation}
\mathcal{H}=\frac{1}{2}(a_1\sin^2\nu + a_2\cos^2\nu)(M^2-N^2) + \frac{a_3}{2}N^2,
\end{equation}
where $(a_1, a_2, a_3)=(1/A, 1/B,1/C)$ with $(A,B,C)$  the principal moments of inertia. Note that in applications we will study the influence of  $B$, which will be taken as physical parameter $A\leq B\leq C$, join with $C<A+B$. 
The differential system is given by three equations
\begin{eqnarray}
&&\frac{\mathrm{d}\nu}{\mathrm{d}t}=\phantom{-}\frac{\partial\mathcal{H}}{\partial N}=N(a_3-a_1\sin^2\nu-a_2\cos^2\nu),\label{nuPunto}\\[0.7ex]
&&\frac{\mathrm{d}N}{\mathrm{d}t}=-\frac{\partial\mathcal{H}}{\partial\nu}=(a_2-a_1)(M^2-N^2)\sin\nu\cos\nu,\label{NPunto}\\
&&\frac{\mathrm{d}\mu}{\mathrm{d}t}=\phantom{-}\frac{\partial\mathcal{H}}{\partial M}=M(a_1\sin^2\nu+a_2\cos^2\nu),\label{muPunto}
\end{eqnarray}
and the other three $(\lambda,\Lambda,M)$ which are integrals. Usually  we integrate first  the system defined by $N$ and $\nu$. More precisely, we solve the Euler system, associated with those variables. Then, the functions solution $N(t)$ and $\nu(t)$ are given making use of the Jacobi elliptic functions
\begin{equation}\label{solutionnu}
\begin{array}{l}
\displaystyle{\sin\nu(t) = \frac{{\rm cn}(s\,t;\,m)}{\sqrt{1+n^*{\rm sn}^2(s\,t;\,m)}}}, \\[2.5ex]
\displaystyle{\cos\nu(t) = \sqrt{1+n^*}\frac{{\rm sn}(s\,t;\,m)}{\sqrt{1+n^*{\rm sn}^2(s\,t;\,m)}}}, \\[2.5ex]
\displaystyle{\hspace{0.4cm}N(t)= R\, {\rm dn}(s\,t;\,m),}
\end{array}
\end{equation}
where
\begin{eqnarray*}
&&R^2 = M^2\frac{C(1-dA)}{C-A}, \quad n^*=-n=\frac{C(B-A)}{A(C-B)},\\
&&m= \frac{(B-A)(dC-1)}{(C-B)(1-dA)}, \quad s^2 = M^2\frac{(C-B)(1-dA)}{ABC},
\end{eqnarray*}
with $d=2h/M^2$. 

\begin{remark} 
The reader will notice that $\sin\nu(t)$  and  $\cos\nu(t)$ are Mahler functions. Moreover from (\ref{JacobiMahler}) we know that $N(t)$ is a ratio of Mahler functions 
\end{remark}

Finally, we obtain $\mu(t)$ by means of a quadrature: 
\begin{equation}\label{cuadraturamu}
\mu=M\int (a_1\sin^2\nu(t)+a_2\cos^2\nu(t))\,\mathrm{d}t
\end{equation}
which is finally expressed by means of a linear function of time and the Legendre third elliptic integral. Integral whose solution Jacobi gave making use of his elliptic and related functions. 

\subsection{On alternative approaches}

We proceed here in a different form than the previous Section \ref{sec:classical}. Making use of the Hamiltonian function, we may separate variables in the system defined by (\ref{nuPunto})-(\ref{NPunto}). More precisely, we denote  
\begin{equation}
 n_1=\frac{a_1-a_2}{d-a_2}, \quad m_1= \frac{a_1-a_2}{a_3-a_2}
\end{equation}
and $\Omega= (d-a_2)(a_3-a_2)$, where we assume $a_3\neq a_2$ and $d\neq a_2$; (the case of equality has to be treated separately). Then, the equation (\ref{nuPunto}) may be written in the form
\begin{equation}
M\Omega\,\mathrm{d}t=\frac{\mathrm{d}\nu}{\sqrt{(1-n_1\sin^2\nu)(1-m_1\sin^2\nu)}}
\end{equation}
where  $n_1=n_1(d,a_2)$ and  $m_1=m_1(d,a_2)$, that is to say, we may study the system under the influence of the intermediate moment of inertia and the value of the Hamiltonian, keeping fixed the other parameters. Again, we have to distinguish circulation and libration patterns, but we do not need to go into details of that procedure here.
\medskip\par\noindent
$\bullet$ In a more detailed form, we see that from (\ref{sistema4mnsolution}) and (\ref{solutionnu}) we may write 
\begin{eqnarray*}
&&\sin\nu(t) = A_1\,{\rm cng}(w;n,m), \\ 
&&\cos\nu(t) = A_2\, {\rm sng}(w;n,m), \\
&&N=A_3\,{\rm dng}(w;n,m)/{\rm fng}(w;n,m),
\end{eqnarray*}
where $A_i$ are quantities depending on the previous constants.
\par
 The quadrature (\ref{cuadraturamu}) of the Andoyer angle variable $\mu$, now takes the form:
\begin{eqnarray}
&&\mu = M\int (a_1\sin^2\nu+a_2\cos^2\nu)\,\mathrm{d}t\nonumber \\
&&\hspace{0.3cm}= M\int (\tilde a_1 {\rm sng}^2t + \tilde a_2 {\rm cng}^2t) \,\mathrm{d}t\\
&&\hspace{0.3cm}= M\tilde a_2\,t + a_1^* \int {\rm sng}^2t \,\mathrm{d}t\nonumber\end{eqnarray}
\par\noindent
$\bullet$ Finally, from what we have seen in Sect. \ref{sec:appendixpp} we find that 
$$ \sin\mu = {\rm Sng}(w;n,m), \qquad \cos\mu = {\rm Cng}(w;n,m)$$
In other words, depending on the use of ${\rm sng}$, etc. or ${\rm Sng}$, etc. we reach the third Legendre elliptic integral in two different forms. Comparisons of the pros and cons of their use, versus the classic approach based on ${\rm sn}$, etc. Jacobi functions, is in progress.
\section{Appendices}
\label{sec:Appendices}
{\bf Appendix A: On the ratios of Jacobi $\theta_i$ functions as solutions of 3-EES.}

From Lawden \cite{Lawden} (Chp.~1) we borrow the following 3-EES differential systems satisfied by the ratios of the Jacobi $\theta_i$ functions
\begin{eqnarray}
&&\label{ratio41}\frac{{\rm d}\phantom{-}}{{\rm d}v}\Big(\frac{\theta_1}{\theta_4}\Big)=\theta_4^2(0)\frac{\theta_2}{\theta_4}\frac{\theta_3}{\theta_4},\\
&&\frac{{\rm d}\phantom{-}}{{\rm d}v}\Big(\frac{\theta_2}{\theta_4}\Big)=-\theta_3^2(0)\frac{\theta_1}{\theta_4}\frac{\theta_3}{\theta_4},\\
&&\frac{{\rm d}\phantom{-}}{{\rm d}v}\Big(\frac{\theta_3}{\theta_4}\Big)=-\theta_2^2(0)\frac{\theta_1}{\theta_4}\frac{\theta_2}{\theta_4},
\end{eqnarray}
etc.  We find convenient to introduce the notation $x_{ij}=\theta_j/\theta_i$ and the reparametrization $v\rightarrow \tau$ given by ${\rm d}\tau=\sqrt{2{\rm K}/\pi}\,{\rm d}v$, with $x_{ij}'={\rm d}x_{ij}/{\rm d}\tau$. Thus, taking into account the values of $\theta_i(0)$, where 
$k^2=m$, $k^2+{k'}^2=1$ and ${\rm K}(m)$ is the the complete Legendre first elliptic integral, we write those IVP  systems as follows. Note that, as was pointed out in Crespo and Ferrer \cite{Crespo2015}, considering the sign of the coefficients, we may distinguish
\medskip\par\noindent
$\bullet$ Two bounded systems:
\begin{eqnarray*}
&&x_{41}'=k'\,x_{42}\,x_{43},\\
&&x_{42}'=-\,x_{41}\,x_{43},\\
&&x_{43}'=-k\,x_{41}\,x_{42}, \qquad (0,\sqrt{k/k'},1/\sqrt{k'})
\end{eqnarray*}
and
\begin{eqnarray*}
&&x_{31}'=x_{32}\,x_{34},\\
&&x_{32}'=-k'\,x_{31}\,x_{34},,\\
&&x_{34}'=k\,x_{31}\,x_{32}, \qquad (0,\sqrt{k},\sqrt{k'})
\end{eqnarray*}
\par\noindent
$\bullet$ Two unbounded systems:
\begin{eqnarray*}
&&x_{21}'=k\,x_{23}\,x_{24},\\
&&x_{23}'=k'\,x_{21}\,x_{24},,\\
&&x_{24}'=x_{21}\,x_{23}, \qquad (0,1/\sqrt{k},\sqrt{k'/k})
\end{eqnarray*}
and 
\begin{eqnarray*}
&&x_{12}'=-k\,x_{13}\,x_{14},\\
&&x_{13}'=-x_{12}\,x_{14},,\\
&&x_{14}'=-k'\,x_{12}\,x_{13}, \quad (1,\sqrt{(k'+1)/k},\sqrt{(k'+1)/k}).
\end{eqnarray*}
Then, we may express those ratios as functions the Jacobi elliptic functions and their Glashier ratios.



\medskip\par\noindent
{\bf Appendix B: Transformations and addition formulas for Jacobi elliptic functions.}

For the benefit of the reader we bring here some well known transformations involving the {\sl elliptic modulus}. They may be found in any handbook of elliptic functions (remember that, depending on the authors, two notations are used: `modulus' or `parameter' related by $k^2\equiv m$, and their complementaries). Those formulas should be used for the reduction to the normal case of some of the particular cases mentioned along the paper.
\medskip\par\noindent
$\bullet$ {\it Negative parameter} \\
Let $m$ be a positive number and write 
\begin{equation}\label{eq:changemnegative}
\mu=\frac{m}{1+m}, \qquad \mu_1=\frac{1}{1+m}, \qquad v=\frac{u}{\sqrt{\mu_1}}.
\end{equation}
Then, 
\begin{eqnarray*}
&&{\rm sn}(u\,;-m)=\sqrt{\mu_1}\,\frac{{\rm sn}(v\,;\,\mu)}{{\rm dn}(v\,;\,\mu)},\\
&&{\rm cn}(u\,;-m)=\frac{{\rm cn}(v\,;\,\mu)}{{\rm dn}(v\,;\,\mu)},\\
&&{\rm dn}(u\,;-m)=\frac{1}{{\rm dn}(v\,;\,\mu)}.
\end{eqnarray*}
Thus elliptic functions with negative parameter may be expressed by elliptic functions with a positive parameter. Note that $0<\mu<1$.\par

A final comment related to the complete elliptic integral of first kind is due here. Unlike {\sl Maple}, the software {\sl Mathematica} yields the following result
\begin{equation}
\int_0^{\pi/2}\frac{\mathrm{d}\phi}{\sqrt{1-m\sin^2\phi}}=\frac{1}{\sqrt{1-m}}\,\mathrm{K}\left(\frac{m}{m-1}\right)
\end{equation}
for $\forall m\leq 1$, instead of the expected result $\mathrm{K}(m)$. By applying the previous change (\ref{eq:changemnegative}), we have that, being $m$ a positive number,
\begin{equation}
\mathrm{K}(-m)=\frac{1}{\sqrt{1+m}}\,\mathrm{K}\left(\frac{m}{1+m}\right)=\sqrt{\mu_1}\,\mathrm{K}(\mu)
\end{equation}
which is exactly the same result given by {\sl Mathematica} for $m<0$.

\medskip\par\noindent
$\bullet$ {\it Reciprocal parameter} \\
Denoting now $v=\sqrt{m}u$, we have
\begin{eqnarray*}
&&{\rm sn}(u\,;m)=\frac{1}{\sqrt{m}}\,{\rm sn}(v\,;m^{-1}),\\
&&{\rm cn}(u\,;m)={\rm dn}(v\,;m^{-1}),\\
&&{\rm dn}(u\,;m)={\rm cn}(v\,;m^{-1}).
\end{eqnarray*}
This is Jacobi's {\it real transformation}. If $m>1$, then $m^{-1}<1$, thus elliptic functions whose parameter is greater than $1$ are related to the ones whose parameter is less than $1$. In short there is no loss of generality assuming $0\leq m\leq 1$.
\medskip\par\noindent
$\bullet$ {\it Decrease of parameter} \\
\begin{equation}
\mu=\Big(\frac{1-\sqrt{m_1}}{1+\sqrt{m_1}}\Big)^2,  \qquad v=\frac{u}{1+\sqrt{\mu}}.
\end{equation}
\begin{eqnarray*}
&&{\rm sn}(u\,;m)=\frac{(1+\sqrt{\mu}){\rm sn}(v\,;\,\mu)}{1+\sqrt{\mu}\,{\rm sn}^2(v\,;\,\mu)},\\[1ex]
&&{\rm cn}(u\,;m)=\frac{{\rm cn}(v\,;\,\mu)\,{\rm dn}(v\,;\,\mu)}{1+\sqrt{\mu}\,{\rm sn}^2(v\,;\,\mu)},\\[1ex]
&&{\rm dn}(u\,;m)=\frac{1-\sqrt{\mu}\,{\rm sn}^2(v\,;\,\mu)}{1+\sqrt{\mu}\,{\rm sn}^2(v\,;\,\mu)}.
\end{eqnarray*}
This is Gauss transformation or the {\it descending} Landen transformation, which makes elliptic functions to depend on functions with a smaller parameter.

Note that, making use of the double angle, we may also write
\begin{equation}
{\rm dn}(u\,;m)=\frac{\sqrt{\mu}\,{\rm cn}(2v\,;\,\mu) + {\rm dn}(2v\,;\,\mu)}{1+\sqrt{\mu}}.
\end{equation}
There are analogous expressions for the increase of parameter. For a recent study where generalized formules are given, see \cite{Khare}.
 \medskip\par\noindent
$\bullet$ {\it Addition formulae} \\
Complementing previous transformations, we collect also here the {\sl addition formulae} 
\begin{eqnarray*}
&&{\rm sn}(\alpha+\beta) =\frac{{\rm sn}\,\alpha \, {\rm cn}\,\beta\,{\rm dn}\,\beta + 
{\rm sn}\,\beta \, {\rm cn}\,\alpha\, {\rm dn}\,\alpha}{1-m\,{\rm sn}^2\alpha \, {\rm sn}^2\beta},\\
&&{\rm cn}(\alpha+\beta) =\frac{{\rm cn}\,\alpha \, {\rm cn}\,\beta - {\rm sn}\,\alpha\,  
 {\rm sn}\,\beta\,{\rm dn}\alpha\, {\rm dn}\,\beta}{1-m\,{\rm sn}^2\alpha \, {\rm sn}^2\beta},\\
&&{\rm dn}(\alpha+\beta) =\frac{{\rm dn}\,\alpha \, {\rm dn}\beta - m\, {\rm sn}\,\alpha\, {\rm sn}\,\beta\,{\rm cn}\,\alpha  \,{\rm cn}\,\beta}{1-m\,{\rm sn}^2\alpha \, {\rm sn}^2\beta},
\end{eqnarray*}
which we have generalized for the new functions; more precisely this has been done for the 4-EES Mahler system. 
\section*{Acknowledgements}
Support from Research Agencies of Spain is acknowledged. They came in the form of research projects MTM 2012-31883, of the Ministry of Science, and  12006\-/PI/09 from Fundaci\'on S\'eneca of the Autonomous Region of Murcia.

\end{document}